\newtheorem{thm}{Theorem}[section]
\newtheorem{prop}{Proposition}[section]
\newtheorem{cor}{Corollary}[section]
\theoremstyle{definition}
\newtheorem{rmk}{Remark}[section]
\newtheorem{question}{Question}[section]
\title{Convergence of Siegel-Veech constants} 
\author{Benjamin Dozier \thanks{Department of Mathematics, Stanford University, \href{mailto:bdozier@stanford.edu}{\nolinkurl{bdozier@stanford.edu}}.  Supported in part by  NSF grant DGE-114747.}}
\date{}
\begin{document}
\maketitle

\begin{abstract}
  We show that for any weakly convergent sequence of ergodic $SL_2(\mathbb{R})$-invariant probability measures on a stratum of unit-area translation surfaces, the corresponding Siegel-Veech constants converge to the Siegel-Veech constant of the limit measure.  Together with a measure equidistribution result due to Eskin-Mirzakhani-Mohammadi, this yields the (previously conjectured) convergence of sequences of Siegel-Veech constants associated to Teichm\"uller curves in genus two.  

The proof uses a recurrence result closely related to techniques developed by Eskin-Masur.  We also use this recurrence result to get an asymptotic quadratic upper bound, with a uniform constant depending only on the stratum, for the number of saddle connections of length at most $R$ on a unit-area translation surface.  
 
\end{abstract}

\section{Introduction}
\label{sec:intro}
\subsection{Setting}

\paragraph{Basic Definitions.} A \emph{translation surface} is a pair $X=(M,\omega)$, where $M$ is a Riemann surface, and $\omega$ is a holomorphic $1$-form.  Away from its zeroes, $\omega$ defines a flat (Euclidean) metric. The metric has a conical singularity of cone angle $2(n+1)\pi$ at each zero of order $n$.  

A \emph{saddle connection} is a geodesic segment that starts and ends at zeroes (we allow the endpoints to coincide), with no zeroes on the interior of the segment.  We can also consider closed loops not hitting zeroes that are geodesic with respect to the flat metric.  Whenever there is one of these, there will always be a continuous family of parallel closed geodesic loops with the same length.  We refer to a maximal such family as a \emph{cylinder}.  Every cylinder is bounded by a union of saddle connections parallel to the cylinder.

The bundle $\Omega \mathcal{M}_g$ of holomorphic 1-forms over $\mathcal{M}_g$ (the moduli space of genus $g$ Riemann surfaces), with zero section removed, can be thought of as the moduli space of translation surfaces.  This bundle breaks up into strata of translation surfaces that have the same multiplicities of the zeroes of $\omega$.  We denote by $\mathcal{H}(m_1,\dots,m_k)$ the stratum of unit-area surfaces with $k$ zeroes of order $m_1,\ldots,m_k$. 

There is an action of $SL_2(\mathbb{R})$ on each stratum $\mathcal{H}$ which will play a central role in our discussion.  To see the action, we first observe that by cutting along saddle connections, we can represent every translation surface as a set of polygons in the plane, such that every side is paired up with a parallel side of equal length.  Since $SL_2(\mathbb{R})$ acts on polygons in the plane, preserving the property of a pair of sides being parallel and equal length, the group acts on $\mathcal{H}$.  We will work mostly with elements of the following form:
\begin{eqnarray*}
  g_t =
  \left(\begin{matrix}
    e^{-t} & 0 \\
    0 & e^t
  \end{matrix}\right), \text{  \ \    } r_{\theta} = \left(
        \begin{matrix}
          \cos \theta & -\sin\theta \\
          \sin \theta & \cos\theta
        \end{matrix}\right).
\end{eqnarray*}

\paragraph{Invariant measures.} On each $\mathcal{H}$ there is a canonical probability measure $\mu_{MV}$ in the Lebesgue measure class, called the \emph{Masur-Veech measure}, which is $SL_2(\mathbb{R})$-invariant and in fact ergodic (\cite{masur1982}, \cite{veech1982}).  This measure is defined in terms of the periods of the $1$-form $\omega$.  

There is a rich interplay between dynamics on an individual translation surface $X$ (e.g. properties of saddle connections or cylinders) and the dynamics of the $SL_2(\mathbb{R})$ action on strata.  In particular, by the seminal work of Eskin-Mirzakhani \cite{em2013} and Eskin-Mirzakhani-Mohammadi \cite{emm2015}, the orbit closure $\overline{SL_2(\mathbb{R})X}$ supports a canonical $SL_2(\mathbb{R})$-invariant ergodic probability measure, and properties of this measure are closely connected to dynamics on individual translation surfaces.

\paragraph{Siegel-Veech constants.}  Let $N(X,R)$ be the number of cylinders on $X$ of length at most $R$.  The study of asymptotics of this function as $R\to\infty$ is of central importance and has inspired much of the work on spaces of translation surfaces.  By work of Masur (\cite{masur1988} and \cite{masur1990}), for a fixed $X$, there are quadratic upper and lower bounds for the growth of $N(X,R)$ in terms of $R$.  If $\mu$ is an ergodic $SL_2(\mathbb{R})$-invariant probability measure on $\mathcal{H}$, then by Eskin-Masur (\cite{em2001}) there exists a constant $c(\mu)$, the \emph{cylinder Siegel-Veech constant} associated to $\mu$, characterized by the property that for $\mu$-a.e. $X$ in $\mathcal{H}$, 
$$N(X,R) \sim c(\mu) \cdot R^2$$
as $R\to \infty$.  It is an important open question whether for \emph{every} translation surface $X$, there exists some $c$ such that $N(X,R) \sim c R^2$, as $R\to\infty$.  This question is closely connected to the classification of measures on $\mathcal{H}$ that are invariant under the unipotent subgroup $\left\{ \left(\begin{matrix}
    1 & * \\
    0 & 1
  \end{matrix}\right) \right\} \subset SL_2(\mathbb{R})$. 

For more information on translation surfaces, the reader can consult one of many surveys available on the topic, for instance \cite{zorich2006} or \cite{wright2015}.

\subsection{Convergence of Siegel-Veech constants}

The first new result states that if measures converge, then the corresponding Siegel-Veech constants do as well.  

\begin{thm} Suppose $\mu_1,\mu_2,\ldots$ are ergodic $SL_2(\mathbb{R})$-invariant probability measures on $\mathcal{H}$, and that $\mu_n \to \eta$, in the weak-* topology, where $\eta$ is another ergodic $SL_2(\mathbb{R})$-invariant probability measure.  Then the Siegel-Veech constants satisfy $c(\mu_n) \to c(\eta)$. 
\label{thm:conv}
\end{thm}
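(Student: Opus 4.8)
The plan is to reduce the convergence of the constants to the convergence of integrals of a single Siegel--Veech transform, and then to establish that convergence by a compact-exhaustion argument whose tail is controlled uniformly in $n$ via an Eskin--Masur--type recurrence estimate. First I would recall the Siegel--Veech formula. Fix a nonnegative $f \in C_c(\R^2)$, supported away from the origin and normalized so that $\int_{\R^2} f\,dx\,dy = 1$, and define the Siegel--Veech transform
$$\hat f(X) = \sum_{c} f(\mathrm{hol}(c)),$$
the sum running over the core holonomy vectors of the cylinders $c$ on $X$. For any ergodic $SL_2(\R)$-invariant probability measure $\mu$ the formula gives $\int_{\mathcal H}\hat f\,d\mu = c(\mu)$, with a proportionality constant that is the same for every such $\mu$ and, in particular, independent of the choice of $f$. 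Thus it suffices to prove that $\int_{\mathcal H}\hat f\,d\mu_n \to \int_{\mathcal H}\hat f\,d\eta$. Taking $f$ continuous rather than an indicator ensures that $\hat f$ is continuous on the interior of $\mathcal H$, since in period coordinates the finitely many cylinder holonomies lying in the compact set $\mathrm{supp}(f)$ vary continuously and none can appear or disappear under small perturbations.

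The obstruction is that $\hat f$ is neither bounded nor compactly supported: as $X$ approaches the boundary of $\mathcal H$ the surface develops short saddle connections, the number of cylinders with holonomy in $\mathrm{supp}(f)$ blows up, and $\hat f$ is unbounded on every neighborhood of the thin part. Consequently the weak-$*$ convergence $\mu_n \to \eta$, which only controls integrals of bounded continuous functions, does not apply to $\hat f$ directly. The remedy is to choose a continuous cutoff $\chi_K$ equal to $1$ on a large compact set $K \subset \mathcal H$ and supported in a slightly larger compact set; then $\hat f\,\chi_K$ is continuous and bounded, so weak-$*$ convergence yields $\int \hat f\,\chi_K\,d\mu_n \to \int \hat f\,\chi_K\,d\eta$ for each fixed $K$. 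What remains is to control the error $\int \hat f\,(1-\chi_K)\,d\mu_n$ uniformly in $n$.

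This uniform tail bound is the heart of the argument and the step I expect to be the main obstacle. Concretely, I aim to prove that there is some $\delta>0$ with
$$\sup_n \int_{\mathcal H}\hat f^{\,1+\delta}\,d\mu_n < \infty.$$
Because each $\mu_n$ is $SL_2(\R)$-invariant, one can estimate $\int \hat f^{\,1+\delta}\,d\mu_n$ by averaging the transform over the circles $r_\theta X$ and along the flow $g_t X$; the resulting integral inequalities bound the contribution of short saddle connections and, crucially, depend only on the invariance of the measure and on the stratum $\mathcal H$, not on the individual $\mu_n$. Obtaining this uniformity is exactly where the recurrence result closely related to Eskin--Masur must be invoked, and it is the genuinely delicate part of the proof.

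Granting the uniform $L^{1+\delta}$ bound, the family $\{\hat f\}$ is uniformly integrable with respect to $\{\mu_n\}$, and the same bound passes to the limit to give integrability against $\eta$: for every $\epsilon>0$ there is a compact $K$ with $\sup_n \int \hat f\,(1-\chi_K)\,d\mu_n < \epsilon$ and $\int \hat f\,(1-\chi_K)\,d\eta < \epsilon$. Combining this uniform tail estimate with the convergence on the compact part $K$ then yields $\int_{\mathcal H} \hat f\,d\mu_n \to \int_{\mathcal H}\hat f\,d\eta$, and therefore $c(\mu_n)\to c(\eta)$, as desired.
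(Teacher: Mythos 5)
Your architecture matches the paper's proof: reduce $c(\mu_n)\to c(\eta)$ to convergence of $\int_{\mathcal H}\hat f\,d\mu_n$ via the Siegel--Veech formula, cut off by a compact exhaustion so that weak-$*$ convergence handles the compact part, and control the tail \emph{uniformly} in the measure by a recurrence estimate whose constants depend only on the stratum. You have also correctly located where the delicate input sits: the paper gets the uniform bound by combining the pointwise estimate $\hat f \le C/\ell^{1+\delta}$ (\cite{em2001}, Theorem 5.1(a)) with Nevo's ergodic theorem applied to circle averages at a $\mu$-generic point and with Proposition \ref{prop:ineq}, whose additive constant $b$ is independent of the base surface and hence of the measure; this yields $\int_{\mathcal H}\ell^{-(1+\delta+\delta')}\,d\mu \le b$ simultaneously for every ergodic invariant probability measure $\mu$. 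Your proposed uniform $L^{1+\delta}$ bound on $\hat f$ would be proved by exactly this mechanism (after checking that the combined exponent stays below $3/2$ so the recurrence applies).

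There is, however, one genuine gap in your final deduction: a uniform bound on $\int \hat f^{\,1+\delta}d\mu_n$ gives uniform integrability, i.e.\ control of $\int_{\{\hat f>M\}}\hat f\,d\mu_n$, but it does \emph{not} by itself produce a compact $K$ with $\sup_n\int_{\mathcal H\setminus K}\hat f\,d\mu_n<\epsilon$. The reason is that $\hat f$ is not proper: arbitrarily deep in the thin part of the stratum there are surfaces with, say, exactly one cylinder whose holonomy lies in $\operatorname{supp} f$ (or none at all), so $\hat f$ can be of order $1$ or zero there. A sequence of probability measures could in principle push a fixed amount of mass into such regions; all moments of $\hat f$ would remain bounded, yet every tail integral over the complement of a compact set would remain of size comparable to the escaping mass. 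To close this you must either invoke tightness of $\{\mu_n\}$ -- which does hold here because the $\mu_n$ are probability measures converging weak-$*$ to the probability measure $\eta$, but which you never state -- or do what the paper does and run the tail estimate through the proper function $1/\ell$ rather than through $\hat f$ itself: on the complement of $C_K=\{X : 1/\ell(X)\le K\}$ one has $1/\ell>K$, hence $\hat f \le C/\ell^{1+\delta}\le C K^{-\delta'}\ell^{-(1+\delta+\delta')}$, and the uniform bound $\int\ell^{-(1+\delta+\delta')}d\mu\le b$ gives $\int_{\mathcal H\setminus C_K}\hat f\,d\mu \le Cb\,K^{-\delta'}$ for all invariant $\mu$ at once. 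The properness of $1/\ell$ (its sublevel sets are exactly the compacta $C_K$), together with the margin $\delta'$ in the exponent, does the work that a separate tightness argument would otherwise have to do.
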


In Section \ref{sec:applications}, we use Theorem \ref{thm:conv} to prove convergence, for the stratum $\mathcal{H}(2)$, of Siegel-Veech constants for non-arithmetic Teichm\"uller curves (numerical evidence of this was found by Bainbridge \cite{bainbridge2007}), and for arithmetic Teichm\"uller curves (conjectured by Leli{\`e}vre, based on numerical evidence and proof in a restricted case \cite{lelievre2006}).   This involves a new type of application of the measure equidistribution result of Eskin-Mirzakhani-Mohammadi in \cite{emm2015}. 

\begin{rmk} 
We will work with the cylinder Siegel-Veech constant for concreteness, but the result and proof work for other Siegel-Veech constants as well, in particular for the saddle connection Siegel-Veech constant (which counts saddle connections rather than cylinders) and the area Siegel-Veech constant (which is formed from counts of cylinders weighted by the area of the cylinder).   The result and proof also works with a pair $(M,q)$, where $q$ is a holomorphic \emph{quadratic} differential (also known as a \emph{half-translation surface}).  
\end{rmk}

\begin{rmk}
  Note that if we define the Siegel-Veech constant of $X$ to be the Siegel-Veech constant of the canonical measure whose support is the orbit closure $\overline{SL_2(\mathbb{R})X}$, then this does \emph{not} define a continuous function on $\mathcal{H}$.  This is because special surfaces with small orbit closure, for instance Veech surfaces, will often have Siegel-Veech constants different from $c(\mu_{MV})$, the Siegel-Veech constant for the Masur-Veech measure on $\mathcal{H}$ (see Section \ref{sec:previous-work} for references), while a dense subset of surfaces will have Siegel-Veech constant equal to $c(\mu_{MV})$. 
\end{rmk}

\subsection{Uniform asymptotic quadratic upper bound}

The second new result gives a \emph{uniform} quadratic upper bound on the number of cylinders, which holds, asymptotically, for \emph{all} surfaces in a stratum.  

\begin{thm}
  Given $\mathcal{H}$ a unit-area stratum, there exists a constant $c_{\max}$ such that for any surface $X\in \mathcal{H}$, 
$$N(X,R) \le c_{\max} R^2$$
for all $R\ge R_0(X)$, where $R_0: \mathcal{H} \to \mathbb{R}$ is an explicit function of the length of the shortest saddle connection on $X$ (and of the genus of the stratum).  
\label{thm:uniform}
\end{thm}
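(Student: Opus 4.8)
The plan is to reduce the counting of cylinders in a Euclidean disk to a circle average of a Siegel-Veech transform, and then to control that average uniformly over the stratum using a recurrence estimate of Eskin-Masur type. For a bounded compactly supported function $f$ on $\R^2$, write $\hat f(Y)=\sum_v f(v)$ for the associated Siegel-Veech transform, the sum ranging over the holonomy vectors $v\in\Lambda(Y)$ of core curves of cylinders on $Y$. The first step is a purely geometric "unfolding" identity. Fix the box $B_0=[1,2]\times[-1,1]$ and set $t=\log R$. Since the $SL_2(\mathbb{R})$-action on holonomy vectors is linear, $v$ contributes to $\widehat{\chi_{B_0}}(g_t r_\theta X)$ exactly when $r_\theta v\in g_{-t}B_0=[R,2R]\times[-R^{-1},R^{-1}]$; this box sits at radius $\sim R$ with angular width $\sim R^{-2}$, so as $\theta$ sweeps $[0,2\pi)$ these boxes cover the dyadic annulus and each vector is captured for $\theta$-measure $\sim R^{-2}$. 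A Jacobian computation then yields, for a stratum-independent constant $C_0$,
\[
\#\{v\in\Lambda(X): R\le |v|\le 2R\}\ \le\ C_0\, R^2 \int_0^{2\pi} \widehat{\chi_{B_0}}(g_{\log R}\, r_\theta X)\, d\theta .
\]

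The heart of the matter is then a \emph{uniform} bound on the circle average $\int_0^{2\pi}\widehat{\chi_{B_0}}(g_s r_\theta X)\,d\theta$, with a constant independent of $X$, valid once $s$ is past a threshold. I would obtain this from two ingredients. The first is a crude pointwise bound $\widehat{\chi_{B_0}}(Y)\le C_2/\ell(Y)^2$, where $\ell(Y)$ is the length of the shortest saddle connection: on a unit-area surface the number of holonomy vectors in a fixed box is controlled by the systole via a packing argument, with a constant depending only on the combinatorics (genus) of the stratum. This bound is already a quadratic estimate, but it is \emph{not} uniform, since it blows up as $\ell(Y)\to 0$; the role of the recurrence result is precisely to trade this blow-up for the threshold $R_0(X)$.

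The second ingredient, and the main obstacle, is the recurrence estimate: there exist $0<b<1$ and constants $C_3,M$ depending only on $\mathcal{H}$ such that
\[
\int_0^{2\pi} \widehat{\chi_{B_0}}(g_s r_\theta X)\, d\theta\ \le\ C_3\, b^{\,s}\,\frac{1}{\ell(X)^2}\ +\ M
\qquad\text{for all } s\ge 0 .
\]
This is the quantitative statement that the $g_s$-flow, averaged over the circle, pushes a surface out of the cusp at a definite exponential rate, so that the contribution of the short saddle connections present on $X$ itself decays while a stratum-uniform term $M$ survives. The delicate point is uniformity: establishing the inequality with $b,C_3,M$ independent of $X$ requires the Eskin-Masur analysis of the flow near the boundary of the stratum (a submean-value inequality for $1/\ell(\cdot)^2$ under one appropriately large time step, iterated to the displayed exponential form). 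This is the recurrence result the abstract refers to, presumably proved earlier in the paper for use in both theorems, and I would invoke it here.

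Granting the recurrence estimate, the theorem follows by dyadic summation over the scales $\rho=R,R/2,R/4,\dots,1$. For scales above the recurrence threshold $\rho^*(X)\sim (1/\ell(X))^{2/\log(1/b)}$, the decaying term $C_3\,\rho^{-\log(1/b)}/\ell(X)^2$ is dominated by $M$, so, using the unfolding bound, those scales together contribute at most a constant multiple of $M R^2$ (the geometric series in $4^{-k}$ converges). The scales below $\rho^*(X)$, together with the central disk $\{|v|\le 1\}$, are estimated by the crude pointwise bound and contribute only an $X$-dependent constant, namely a fixed power of $1/\ell(X)$ with genus-dependent exponent. Choosing $R_0(X)$ to be a suitable power of $1/\ell(X)$ forces this residual constant below $c_{\max}R^2$, and then $N(X,R)\le c_{\max}R^2$ for all $R\ge R_0(X)$, with $c_{\max}\sim C_0 M$ depending only on the stratum. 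The difficulty throughout is keeping every constant uniform over $\mathcal{H}$ — especially the recurrence constants $b,C_3,M$ — which is exactly what confines all dependence on the individual surface to the explicit threshold $R_0(X)$.
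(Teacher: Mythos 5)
Your overall architecture is the same as the paper's: unfold the dyadic annulus count into a circle average of a Siegel--Veech transform at time $\log R$ (the paper cites Proposition 3.5 of \cite{em2001}, with a trapezoid rather than your box), bound that transform pointwise by a power of $1/\ell$, control the circle average by a recurrence estimate whose additive constant is independent of $X$, and finish with dyadic summation past a threshold $R_0(X)$. The gap is the exponent. You run the entire argument at exponent $2$: the packing bound $\widehat{\chi_{B_0}}(Y)\le C_2/\ell(Y)^2$ and, crucially, a ``submean-value inequality for $1/\ell(\cdot)^2$'' iterated to your displayed exponential recurrence. No such inequality exists: $2$ is precisely the critical exponent at which the Eskin--Masur recurrence mechanism breaks down. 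Concretely, if $X$ carries a single saddle connection with holonomy $v$, $|v|=\ell$ small, then for $\theta$ in a set of measure $\asymp e^{-2s}$ the vector $g_s r_\theta v$ has length at most $2e^{-s}\ell$, so
\[
\int_0^{2\pi}\frac{d\theta}{\ell(g_s r_\theta X)^2}\ \ge\ c\, e^{-2s}\cdot\frac{e^{2s}}{\ell^2}\ =\ \frac{c}{\ell^2}
\qquad\text{for every } s\ge 0,
\]
with no decay in $s$ whatsoever; letting $\ell\to 0$ shows no bound of the form $C_3\,b^s\ell(X)^{-2}+M$ with $b<1$ and $M$ uniform over $\mathcal{H}$ can hold. (The same computation at exponent $1+\delta$ produces $e^{-(1-\delta)s}\ell^{-(1+\delta)}$, which is why decay is only available subcritically; the paper's Proposition \ref{prop:ineq} is stated with rate $e^{-(1-2\delta)s}$ and requires $\delta<1/2$.) Note also that, by your own unfolding identity, your displayed recurrence for the circle average of $\widehat{\chi_{B_0}}$ is essentially a restatement of the dyadic form of the theorem being proved, so simply positing it is close to circular; and you cannot instead invoke the paper's recurrence result, because it applies to $1/\ell^{1+\delta}$ with $\delta<1/2$, not to the $1/\ell^2$ quantity your packing bound produces.

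The paper closes exactly this gap with two substitutions. First, the pointwise bound is not a packing argument but the nontrivial Eskin--Masur estimate (Theorem 5.1(a) of \cite{em2001}): $N(Y,4)\le c_1/\ell(Y)^{1+\delta}$ for some $0<\delta<1/2$, a genuinely hard input proved via complexes of saddle connections. Second, that subcritical exponent is then fed into Proposition \ref{prop:ineq}, which does hold with an additive constant $b$ independent of the surface. With those two changes your dyadic summation and choice of threshold $R_0(X)$ go through essentially verbatim. So the missing idea is that the exponent must be shaved from $2$ down to $1+\delta<3/2$ \emph{before} any circle averaging can yield a surface-independent constant; at exponent $2$ the short saddle connections on $X$ never average away.
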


Note that the  function $R_0$ will not in general be bounded for a fixed stratum.  This is because a surface in a fixed stratum can have arbitrarily many short saddle connections (for instance, by taking a surface with a short slit, and then gluing in a cylinder with small height and circumference).  But, according to Theorem \ref{thm:uniform}, as we increase $R$, the effect of these short saddle connections eventually diminishes.  

\begin{rmk} 
As in the case of Theorem \ref{thm:conv}, the result and proof work if we replace the count of cylinders with the count of saddle connections, or the count of cylinders weighted by the area of the cylinder.  The result and proof also work with a pair $(M,q)$, where $q$ is a holomorphic quadratic differential.
\end{rmk}

\subsection{Recurrence result for the proofs}
The main tool needed in the proofs of both Theorem \ref{thm:conv} and Theorem \ref{thm:uniform} is the following proposition, which may be of independent interest.  It is a recurrence-type result which controls the length of the shortest saddle connection, on average, over translation surfaces on a large ``circle'' centered at any $X$.  Here we deduce the proposition directly from a more general result proved in \cite{dozier2017}; in that paper the more general result is used to study the distribution of angles of saddle connections.  

Let $\ell(X)$ denote the length of the shortest saddle connection on $X$, and let 
\begin{eqnarray*}
  g_t =
  \left(\begin{matrix}
    e^{-t} & 0 \\
    0 & e^t
  \end{matrix}\right), \text{  \ \    } r_{\theta} = \left(
        \begin{matrix}
          \cos \theta & -\sin\theta \\
          \sin \theta & \cos\theta
        \end{matrix}\right).
\end{eqnarray*}

\begin{prop}
 For any stratum $\mathcal{H}$ and $0<\delta<1/2$, there exists a function $\alpha: \mathcal{H} \to \mathbb{R}_{\ge 0}$ and constants $c_0,b$ such that for any $X\in \mathcal{H}$,
$$\int_0^{2\pi} \frac{1}{\ell(g_Tr_{\theta}X)^{1+\delta}}d\theta \le c_0 e^{-(1-2\delta) T} \alpha(X) + b,$$
for all $T>0$.  The function $\alpha(X)$ is bounded above by an explicit function of $\ell(X)$ (and the genus of the stratum).  
\label{prop:ineq}
\end{prop}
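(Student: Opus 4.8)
The plan is to reduce the claim to a one-parameter contraction estimate for the circle-average operator $(A_T f)(X) := \int_0^{2\pi} f(g_T r_\theta X)\, d\theta$ applied to $f_\delta(X) := \ell(X)^{-(1+\delta)}$, and then to quote the corresponding averaging inequality from \cite{dozier2017}. The geometric heart of the matter is the behavior of a single holonomy vector. If $v \in \R^2$ is the holonomy of a saddle connection, then writing $r_\theta v$ in polar form gives $|g_T r_\theta v|^2 = |v|^2(e^{-2T}\cos^2\phi + e^{2T}\sin^2\phi)$ with $\phi = \theta + \arg v$, so substituting $u = \tan\phi$ and rescaling by $e^{2T}$ yields
\[
\int_0^{2\pi}\frac{d\theta}{|g_T r_\theta v|^{1+\delta}} \le \frac{C_\delta\, e^{-(1-\delta)T}}{|v|^{1+\delta}},
\]
where $C_\delta = \int_{-\infty}^\infty (1+w^2)^{-(1+\delta)/2}\,dw$ is finite precisely because $\delta > 0$. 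Thus each individual saddle connection contributes a term that decays like $e^{-(1-\delta)T}$, weighted by the inverse length raised to the power $1+\delta$.

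First I would isolate the obstruction. Since $\ell(g_T r_\theta X)^{-(1+\delta)} = \max_v |g_T r_\theta v|^{-(1+\delta)}$, where $v$ ranges over the saddle-connection holonomies of $X$, one would like to dominate this maximum by the sum $\sum_v |g_T r_\theta v|^{-(1+\delta)}$ and integrate termwise using the estimate above. But the series $\sum_v |v|^{-(1+\delta)}$ diverges for $\delta < 1$, because saddle connections grow quadratically, so the naive bound fails outright. Controlling the maximum — equivalently, controlling the angular measure of directions $\theta$ for which $g_T r_\theta X$ acquires a short saddle connection — is exactly where the counting and recurrence machinery of Eskin-Masur \cite{em2001} is needed, in the refined form established in \cite{dozier2017}. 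I would quote that result as a contraction inequality: there is a function $\alpha \colon \mathcal{H} \to \R_{\ge 0}$ dominating $f_\delta$, i.e.\ $\ell(X)^{-(1+\delta)} \le \alpha(X)$, together with constants $c_0,b$ so that
\[
\int_0^{2\pi}\alpha(g_T r_\theta X)\, d\theta \le c_0\, e^{-(1-2\delta)T}\alpha(X) + b \qquad (T > 0).
\]
Combining $\ell(g_T r_\theta X)^{-(1+\delta)} \le \alpha(g_T r_\theta X)$ with this inequality and integrating in $\theta$ gives the proposition at once.

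Two bookkeeping points then remain. The exponent in the cited inequality is $1 - 2\delta$ rather than the sharper $1 - \delta$ coming from a single vector; I would account for the discrepancy by noting that passing from one holonomy vector to the systole (the maximum over all saddle connections) costs a factor controlling the number of simultaneously short saddle connections, and the extra slack $e^{\delta T}$ is precisely what the general theorem spends in order to make this passage uniform across the whole stratum. Finally, I would verify the last sentence of the proposition, that $\alpha(X)$ is bounded by an explicit function of $\ell(X)$ alone. This is the most delicate point: because a surface can carry arbitrarily many short saddle connections (as observed after Theorem \ref{thm:uniform}), the function $\alpha$ cannot be a naive count over short saddle connections, so one must instead track only the systole and check, from the construction in \cite{dozier2017}, that the value $\alpha(X)$ is governed by $\ell(X)$ and the genus. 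The hard part will be exactly this uniformity — extracting a bound in terms of the single shortest saddle connection despite the possible presence of many short ones — and it is also the feature that makes the estimate strong enough to drive the uniform quadratic bound of Theorem \ref{thm:uniform}.
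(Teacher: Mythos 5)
Your proposal is correct and takes essentially the same route as the paper: the paper's entire proof of Proposition \ref{prop:ineq} is a citation of Proposition 2.1 in \cite{dozier2017} (the case $I=[0,2\pi]$ of an inequality over subintervals), whose proof follows exactly the Eskin--Masur / Eskin--Margulis--Mozes ``system of integral inequalities'' machinery you invoke. Your repackaging of that citation as a contraction inequality for a dominating Margulis-type function $\alpha$, together with the single-vector polar computation and the observation that the naive sum over holonomies diverges, is an accurate description of what sits inside the cited proof rather than a genuinely different argument.
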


When we use Proposition \ref{prop:ineq}, it will be crucial that the constant $b$ does not depend on the surface $X$.  Some related results appear in \cite{em2001} (also see \cite{athreya2006}), but the above formulation with an additive constant which does not depend on the surface is new.  


\begin{proof}[Proof of Proposition \ref{prop:ineq}.]
  This is a special case of Proposition 2.1 in \cite{dozier2017}.  That Proposition involves integrating over any subinterval $I\subset [0,2\pi]$; the above is simply the case when $I$ equals $[0,2\pi]$.  The proof in that paper follows the strategy used by Eskin-Masur (\cite{em2001}), but keeps track of the constant $b$ above.  The approach is to use the ``system of integral inequalities'', which was first developed for the proof of the quantitative Oppenheim conjecture by Eskin-Margulis-Mozes (\cite{emm1998}), who were working in the context of lattices.  A key technical aspect unique to the translation surfaces context involves combining ``complexes'' of saddle connections.  
\end{proof}

\subsection{Previous work}
\label{sec:previous-work}
A good deal of progress has been made in understanding the Siegel-Veech constants of Veech surfaces, which often lead to explicit expressions for the quadratic growth rates for billiards on polygons.   In his foundational paper \cite{veech1989}, Veech used Eisenstein series to show that all Veech surfaces satisfy an exact quadratic asymptotic for the growth rate of cylinders, and he gave a way of computing the constants.  He computes the constants for translation surfaces arising from unfolding certain isosceles triangles.  Gutkin-Judge \cite{gj2000} give a different formula for computing the Siegel-Veech constant, the proof of which uses softer ergodic-theoretic results related to counting horocycles in the hyperbolic plane.  Vorobets \cite{vorobets1996} discovered similar results independently.  

Schmoll \cite{schmoll2002}  studied the problem of counting cylinders and saddle connections on tori with additional marked points.  Along similar lines, Eskin-Masur-Schmoll \cite{ems2003} study translation surfaces that are branched covers of tori.  Using Ratner theory, they get exact quadratic asymptotics for all these surfaces, and they explicitly compute the constants for certain surfaces arising from billiards in rectangles with barriers.  In complementary work, Eskin-Marklof-Morris \cite{emm2006} study the case of branched covers of Veech surfaces that are not tori.  They also get exact quadratic asymptotics for these surfaces, and they explicitly compute the constants for certain (non-Veech) surfaces that arise from unfolding triangles.  Their proof modifies the techniques of Ratner to work in their setting, where the relevant moduli space is not homogeneous, but shares some important properties with homogeneous spaces.  Bainbridge-Smillie-Weiss (\cite{bsw2016}) show that in the eigenforn loci in $\mathcal{H}(1,1)$, \emph{all} surfaces satisfy exact quadratic growth asymptotics.  

The results discussed above apply only to special translation surfaces.  In the opposite direction, one can ask about Siegel-Veech constants for the Masur-Veech measure on a whole stratum.  Eskin-Masur-Zorich \cite{emz2003} give a general method for computing these in terms of the volumes of strata and neighborhoods of certain parts of the boundary of strata.  Results of Eskin-Okounkov \cite{eo2001} allow one to compute these volumes.  

The convergence of the \emph{area} Siegel-Veech constants was proven by Matheus-M\"{o}ller-Yoccoz (\cite{mmy2015}, Remark 1.10).  That proof relies on the close connection between area Siegel-Veech constants and sums of Lyapunov exponents and does not work for other Siegel-Veech constants.

\subsection{Outline of the paper}
\begin{itemize}
\item Section \ref{sec:applications} gives an application to Siegel-Veech constants associated to Veech surfaces in genus $2$, an application showing that Siegel-Veech constants are bounded in a fixed stratum, and some results on the set of Siegel-Veech constants associated to all the measures on a stratum.  
\item Section \ref{sec:convergence} contains the proof of Theorem \ref{thm:conv} (Convergence of Siegel-Veech constants), assuming the recurrence-type result Proposition \ref{prop:ineq}.
\item Section \ref{sec:uniform} contains the proof of Theorem \ref{thm:uniform} (Uniform asymptotic quadratic upper bound), assuming Proposition \ref{prop:ineq}.
\item Section \ref{sec:bound} poses open questions about the size of the extremal Siegel-Veech constants in each stratum, and discusses some results in this direction.  
\end{itemize}

\subsection{Acknowledgements}
I would like to thank Maryam Mirzakhani, my thesis advisor, for guiding me with numerous stimulating conversations and suggestions.  I am also very grateful to Alex Wright, for many helpful discussions and detailed feedback.

\section{Applications}
\label{sec:applications}

\subsection{Convergence of Siegel-Veech constants in $\mathcal{H}(2)$}

We give an application of Theorem \ref{thm:conv} in genus two, which we then use to give proofs of the convergence of certain sequences of Siegel-Veech constants.  The application uses in a crucial way the equidistribution result of Eskin-Mirzakhani-Mohammadi \cite{emm2015}.

Let $\mu_{MV}$ be the Masur-Veech measure on the stratum $\mathcal{H}(2)$.

\begin{thm} 
Let $\{C_n\}$ be a sequence of distinct closed $SL_2(\mathbb{R})$-orbits in $\mathcal{H}(2)$, and let $\mu_n$ be the ergodic $SL_2(\mathbb{R})$-invariant probability measure whose support is $C_n$.  Then
 $$\lim_{n\to\infty} c(\mu_n) =c(\mu_{MV})=\frac{10}{\pi}.$$
\label{thm:h2}
\end{thm}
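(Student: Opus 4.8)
The plan is to reduce Theorem~\ref{thm:h2} to the abstract convergence statement of Theorem~\ref{thm:conv}: it suffices to prove that $\mu_n \to \mu_{MV}$ in the weak-$*$ topology, the remaining hypotheses of Theorem~\ref{thm:conv} being immediate. Each $\mu_n$ is the $SL_2(\mathbb{R})$-invariant probability measure on a single closed orbit, hence ergodic (the action is transitive on the orbit), and $\mu_{MV}$ is ergodic by Masur-Veech \cite{masur1982, veech1982}. The value $c(\mu_{MV}) = 10/\pi$ is the cylinder Siegel-Veech constant of $\mathcal{H}(2)$ computed by Eskin-Masur-Zorich \cite{emz2003}. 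Thus the entire content of the theorem lies in establishing the measure convergence $\mu_n \to \mu_{MV}$.

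To prove $\mu_n \to \mu_{MV}$ it is enough to show that every weak-$*$ subsequential limit of $\{\mu_n\}$ equals $\mu_{MV}$, so fix such a subsequence with limit $\eta$. The first step is to invoke the equidistribution and isolation results of Eskin-Mirzakhani-Mohammadi \cite{emm2015}: these guarantee that no mass escapes into the cusp, so that $\eta$ is again an affine $SL_2(\mathbb{R})$-invariant probability measure, and moreover that $\operatorname{supp}(\mu_n) = C_n \subseteq \operatorname{supp}(\eta)$ for all large $n$ along the subsequence. (The no-escape-of-mass input is of exactly the recurrence flavour quantified by Proposition~\ref{prop:ineq}.) The second step uses McMullen's classification of $SL_2(\mathbb{R})$-orbit closures in genus two, according to which every affine invariant submanifold of $\mathcal{H}(2)$ is either a closed orbit or all of $\mathcal{H}(2)$; correspondingly the only affine probability measures are those carried by closed orbits, together with $\mu_{MV}$. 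If $\operatorname{supp}(\eta)$ were a single closed orbit $C$, then $C_n \subseteq C$ for large $n$; but $C_n$ and $C$ are both connected $3$-dimensional $SL_2(\mathbb{R})$-orbits, so $C_n$ is open (by invariance of domain) and closed in $C$, forcing $C_n = C$ and hence making the $C_n$ eventually constant --- contradicting their distinctness. Therefore $\operatorname{supp}(\eta) = \mathcal{H}(2)$, and the unique affine probability measure of full support gives $\eta = \mu_{MV}$.

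Since every subsequential limit is $\mu_{MV}$, the full sequence converges weak-$*$ to $\mu_{MV}$, and Theorem~\ref{thm:conv} delivers $c(\mu_n) \to c(\mu_{MV}) = 10/\pi$. I expect the main obstacle to be the first step: extracting from the Eskin-Mirzakhani-Mohammadi machinery, in the precise form needed, both the absence of escape of mass for a sequence of distinct closed orbits and the support-containment property. This is the crucial point, because without the containment $C_n \subseteq \operatorname{supp}(\eta)$ the distinctness of the orbits could not be leveraged to rule out the limit concentrating on a smaller Teichm\"uller curve; the classification step and the dimension/connectedness argument, by contrast, are routine once the containment is in hand.
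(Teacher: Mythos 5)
Your proposal is correct and follows essentially the same route as the paper: reduce to Theorem~\ref{thm:conv}, use the Eskin--Mirzakhani--Mohammadi equidistribution result (their Corollary~2.5/Theorem~2.3, which supplies exactly the no-escape-of-mass and support-containment properties you flag as the crucial point) to show any limit is an affine measure containing infinitely many of the $C_n$, rule out a closed-orbit limit by distinctness (the paper does this in one line, since distinct orbits are disjoint, making your invariance-of-domain argument unnecessary), and conclude via McMullen's classification and the Eskin--Masur--Zorich computation of $c(\mu_{MV})=10/\pi$.
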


Recall that for surfaces generating a closed $SL_2(\mathbb{R})$ orbit (known as Veech surfaces), the generic quadratic growth constant for the whole orbit actually equals the constant for every surface in the orbit (\cite{veech1989}, Proposition 3.10).  Thus the above result implies that the quadratic growth constants for a sequence of distinct Veech surfaces in $\mathcal{H}(2)$ tend to the constant for the whole stratum.  

\begin{proof}
We claim that $\lim_{n\to\infty} \mu_n = \mu_{MV}$.  By the equidistribution result in \cite{emm2015} (Corollary 2.5), if this were not the case, there would exist a subsequence $k_n \to \infty$ and $\mathcal{N}$ an affine invariant submanifold of $\mathcal{H}(2)$, with each $C_{k_n}$ contained in $\mathcal{N}$.  (An affine invariant manifold is the image of a proper immersion from a connected manifold to a stratum that is cut out locally by homogeneous real linear equations in period coordinates).  Now $\mathcal{N}$ cannot be a single closed $SL_2(\mathbb{R})$-orbit, since it contains infinitely many distinct closed $SL_2(\mathbb{R})$-orbits.  Then by McMullen's classification (\cite{mcmullen2007}, Theorem 1.2), $\mathcal{N}$ must be the whole stratum $\mathcal{H}(2)$, contradiction, establishing the claim.

Now Theorem \ref{thm:conv} gives $\lim_{n\to\infty} c(\mu_n) = c(\mu_{MV})$.  By \cite{emz2003} (Example 14.7, second case), $c(\mu_{MV})=\frac{10}{\pi}$ (the normalization for the Siegel-Veech constant used in that paper differs from ours by a factor of $\pi$).  

\end{proof}

The two corollaries below, which apply to non-arithmetic and arithmetic Veech surfaces, respectively, follow immediately from Theorem \ref{thm:h2}.  

\begin{cor}[Convergence for non-arithmetic Veech surfaces]
  Let $D$ be a positive integer that is not a perfect square, with $D\equiv 0,1 \bmod 4$, and let $E_D$ be the $SL_2(\mathbb{R})$-orbit in $\mathcal{H}(2)$ of a pair $(M,\omega)$ for which the Jacobian $\operatorname{Jac}(M)$ admits real multiplication by $\mathcal{O}_D$, the ring of integers in $\mathbb{Q}[\sqrt{D}]$, with $\omega$ an eigenform (these orbits are known to be closed).  Let $\mu_D$ be the ergodic $SL_2(\mathbb{R})$-invariant probability measure whose support is $E_D$.  Then 
$$\lim_{D\to\infty} c(\mu_D) =c(\mu_{MV})=\frac{10}{\pi}.$$
\end{cor}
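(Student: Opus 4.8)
The plan is to deduce this directly from Theorem \ref{thm:h2}, since the corollary really is an immediate specialization. I would enumerate all admissible discriminants $D_1 < D_2 < \cdots$ (the positive non-square integers congruent to $0$ or $1$ modulo $4$) in increasing order, so that $D_n \to \infty$; the desired limit $\lim_{D\to\infty} c(\mu_D)$ is then precisely the limit along this enumeration. By hypothesis each $E_{D_n}$ is a closed $SL_2(\mathbb{R})$-orbit in $\mathcal{H}(2)$ and $\mu_{D_n}$ is the ergodic invariant probability measure it carries, so the only thing left to check before invoking Theorem \ref{thm:h2} is that the orbits $E_{D_n}$ are pairwise distinct.

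To establish distinctness I would argue that the discriminant $D$ is an invariant of the orbit. The eigenform locus of pairs $(M,\omega)$ whose Jacobian admits real multiplication by $\mathcal{O}_D$ with $\omega$ an eigenform is $SL_2(\mathbb{R})$-invariant — this is built into the construction of these loci — so every pair lying in a single orbit determines the same order $\mathcal{O}_D$, and hence the same $D$. Consequently $E_{D_i} \cap E_{D_j} = \emptyset$ for $i \neq j$, and in particular the $E_{D_n}$ are distinct closed orbits.

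With distinctness in hand, Theorem \ref{thm:h2} applies to the sequence $\{E_{D_n}\}$ and yields $c(\mu_{D_n}) \to c(\mu_{MV}) = \tfrac{10}{\pi}$, which is exactly the asserted limit. I expect the argument to be essentially immediate once Theorem \ref{thm:h2} is granted; the one point requiring care — and the only mild obstacle — is confirming the distinctness of the loci $E_D$, i.e. the $SL_2(\mathbb{R})$-invariance of the discriminant, which is what guarantees that the distinctness hypothesis of Theorem \ref{thm:h2} is genuinely satisfied as $D$ ranges to infinity.
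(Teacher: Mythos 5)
Your proposal is correct and matches the paper's approach: the paper states that this corollary ``follows immediately'' from Theorem \ref{thm:h2} and gives no further argument, and your deduction is exactly that specialization. Your verification of the distinctness hypothesis --- that the discriminant is an $SL_2(\mathbb{R})$-orbit invariant, so the loci $E_D$ are pairwise disjoint closed orbits --- is precisely the routine detail the paper leaves implicit.
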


Bainbridge found a formula for $c(\mu_D)$ and numerical evidence suggesting that the above convergence holds (\cite{bainbridge2007}, discussion after Theorem 14.1).  

\begin{cor}[Convergence for arithmetic Veech surfaces]
  Let $\{S_n\}$ be a sequence of square-tiled surfaces in $\mathcal{H}(2)$, where $S_n$ is tiled by exactly $k_n$ squares, with $k_n\to\infty$.  Let $\mu_n$ be the ergodic $SL_2(\mathbb{R})$-invariant probability measure whose support is the (closed) orbit $SL_2(\mathbb{R})S_n$.  Then 
$$\lim_{n\to\infty} c(\mu_n) = c(\mu_{MV}) = \frac{10}{\pi}.$$
\end{cor}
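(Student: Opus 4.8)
The plan is to deduce this directly from Theorem \ref{thm:h2}, whose hypothesis is a sequence of \emph{distinct} closed orbits. Since the orbits $C_n = SL_2(\mathbb{R})S_n$ may a priori repeat, the real work is to show that each orbit occurs for only finitely many indices, and then to promote the convergence guaranteed along distinct-orbit subsequences to convergence of the full sequence $c(\mu_n)$.

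First I would recall that a square-tiled surface is a Veech (lattice) surface, so each $C_n$ is indeed a closed $SL_2(\mathbb{R})$-orbit, as asserted. Normalizing so that the tiles are unit squares, I read ``tiled by exactly $k_n$ squares'' as the reduced (primitive) tiling, i.e.\ $k_n$ is the degree of the primitive branched covering $S_n \to \mathbb{R}^2/\mathbb{Z}^2$. The key step is the observation that this number of squares is an invariant of the $SL_2(\mathbb{R})$-orbit: the group acts simultaneously on the surface and on the base torus, so the covering degree is preserved along an orbit. Concretely, if $\widehat S$ and $A\widehat S$ (with $A\in SL_2(\mathbb{R})$) are both unit-area normalizations of primitive origamis with $N$ and $N'$ squares respectively, comparing period lattices forces $A$ to lie in $\sqrt{N/N'}\,GL_2(\mathbb{Z})$, and then $\det A = 1$ gives $N = N'$.

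It follows that $S_n$ lies in a given orbit $C$ only when $k_n = N(C)$, the well-defined number of squares attached to $C$. Since $k_n \to \infty$, for each fixed $C$ the index set $\{n : C_n = C\} \subseteq \{n : k_n = N(C)\}$ is finite; that is, the assignment $n \mapsto C_n$ is finite-to-one. Now I would run the standard subsequence argument: given any subsequence $(n_j)$, the finite-to-one property lets me extract a further subsequence along which the orbits $C_{n_{j_i}}$ are pairwise distinct, and Theorem \ref{thm:h2} yields $c(\mu_{n_{j_i}}) \to 10/\pi$. Since every subsequence has a sub-subsequence converging to the common limit $10/\pi$, the full sequence satisfies $c(\mu_n) \to 10/\pi$.

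The main obstacle is the single substantive point above, namely the orbit-invariance of the number of squares (equivalently, that a fixed arithmetic Teichm\"uller curve contains only finitely many of the $S_n$); without it one could place all $S_n$ in a single orbit by passing to non-reduced tilings, and the conclusion would fail. Everything else is the soft extraction argument layered on top of Theorem \ref{thm:h2}, which is why the author can call the corollary immediate.
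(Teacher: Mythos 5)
Your proposal is correct and follows the same route as the paper, which simply states that this corollary is immediate from Theorem \ref{thm:h2}. The substance you add---that the (primitive) number of squares is an $SL_2(\mathbb{R})$-orbit invariant, so $k_n \to \infty$ makes $n \mapsto C_n$ finite-to-one and permits the distinct-orbit subsequence extraction---is exactly the detail the paper leaves implicit, and your treatment of it is sound.
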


This result was conjectured by Leli{\`e}vre, who proved it with the additional restriction that each $k_n$, the number of squares, is a prime, and found numerical evidence for the general case  (\cite{lelievre2006}).

\begin{rmk}
  One can also use the strategy above to prove that the Siegel-Veech constants corresponding to the eigenform loci in $\mathcal{H}(1,1)$ (which are no longer just closed orbits) converge to the Siegel-Veech constant for $\mathcal{H}(1,1)$.  For non-arithmetic eigenform loci, this was proven by Bainbridge; in fact the Siegel-Veech constants are the same for all the non-arithmetic eigenform loci, except the $D=5$ locus (\cite{bainbridge2010}, Theorem 1.5).  For the arithmetic eigenform loci, convergence was proven by Eskin-Masur-Schmoll (\cite{ems2003}, Theorem 1.3); here the sequence of Siegel-Veech constants is not eventually constant.   
\end{rmk}

\subsection{Boundedness of Siegel-Veech constants}
\label{sec:bound-sv}

\begin{thm} \label{thm:upperbound}
  Fix a stratum $\mathcal{H}$.  There exists a bound $B$ (depending on the stratum) such that for any ergodic $SL_2(\mathbb{R})$-invariant probability measure $\mu$ on $\mathcal{H}$,
$$c(\mu) \le B.$$
\end{thm}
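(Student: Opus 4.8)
The plan is to combine the Siegel--Veech integral formula with the uniform recurrence estimate of Proposition \ref{prop:ineq}, the key point being that the additive constant $b$ there does not depend on the surface, hence it will not depend on the measure $\mu$ either. Recall the Siegel--Veech formula of Eskin--Masur (\cite{em2001}): for the indicator $f=\mathbf 1_{B_R}$ of the ball of radius $R$, the cylinder Siegel--Veech transform is $\hat f(X)=N(X,R)$, and
\[
\int_{\mathcal H} N(X,R)\, d\mu(X) = c(\mu)\,\pi R^2
\]
for every $R>0$ and every ergodic $SL_2(\mathbb{R})$-invariant probability measure $\mu$. Thus $c(\mu)=\frac{1}{\pi R^2}\int N(X,R)\,d\mu$, and it suffices to bound $\int N(X,R)\,d\mu$ by a constant times $R^2$, with the constant independent of $\mu$.

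The engine for uniformity is the moment bound $\int_{\mathcal H}\ell(X)^{-(1+\delta)}\,d\mu(X)\le \frac{b}{2\pi}$, valid for \emph{every} invariant probability measure $\mu$. To prove it, I would use that $\int \ell^{-(1+\delta)}\,d\mu$ is unchanged under replacing $X$ by $g_T r_\theta X$ (by invariance); averaging over $\theta\in[0,2\pi]$ and applying Tonelli together with Proposition \ref{prop:ineq} gives, for every $T>0$,
\[
\int_{\mathcal H}\ell^{-(1+\delta)}\,d\mu = \frac{1}{2\pi}\int_{\mathcal H}\left(\int_0^{2\pi}\ell(g_T r_\theta X)^{-(1+\delta)}\,d\theta\right)d\mu(X) \le \frac{c_0 e^{-(1-2\delta)T}}{2\pi}\int_{\mathcal H}\alpha\, d\mu + \frac{b}{2\pi}.
\]
Since $0<\delta<1/2$, letting $T\to\infty$ kills the first term (using that $\alpha$ is $\mu$-integrable), leaving the asserted bound, which is uniform in $\mu$ precisely because $b$ is.

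To convert this into a bound on the count, I would invoke a pointwise domination $N(X,R)\le C\,R^2\big(\ell(X)^{-(1+\delta)}+1\big)$, valid for all $R\ge 1$ with $C$ depending only on the stratum; this is the counting analogue of Proposition \ref{prop:ineq} and Theorem \ref{thm:uniform}, reflecting that the short saddle connections responsible for a large count arise from a topologically bounded number of thin cylinders. Granting it,
\[
c(\mu) = \frac{1}{\pi R^2}\int_{\mathcal H} N(X,R)\,d\mu \le \frac{C}{\pi}\int_{\mathcal H}\big(\ell^{-(1+\delta)}+1\big)\,d\mu \le \frac{C}{\pi}\left(\frac{b}{2\pi}+1\right) =: B,
\]
which is the claim (the factor $R^2$ cancels, so no limit is even needed).

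The main obstacle is this domination step. The naive pointwise bound $N(X,R)\lesssim R^2/\ell(X)^2$ carries exponent $2$ on $1/\ell$, which is not integrable against the $\ell^{-(1+\delta)}$-moment we control (since $1+\delta<3/2<2$); closing this gap requires the sharper estimate with exponent $1+\delta$, which is exactly where the combinatorics of \emph{complexes} of simultaneously short saddle connections underlying Proposition \ref{prop:ineq} must be used, and one must also check that $\alpha\in L^1(\mu)$ so the $e^{-(1-2\delta)T}$ term truly vanishes. An essentially equivalent route avoids guessing the exponent: instead of a pointwise bound, compare the circle average $\int_0^{2\pi} N(g_T r_\theta X,1)\,d\theta$ directly with $\int_0^{2\pi}\ell(g_T r_\theta X)^{-(1+\delta)}\,d\theta$ and apply Proposition \ref{prop:ineq}; this matches the exponent automatically but concentrates all the difficulty into the same saddle-connection comparison.
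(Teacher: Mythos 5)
Your skeleton (Siegel--Veech formula at a fixed radius plus a $\mu$-independent moment bound extracted from Proposition \ref{prop:ineq}) is viable, and it differs from both of the paper's proofs, which are soft: one runs Theorem \ref{thm:conv} against Eskin--Mirzakhani--Mohammadi equidistribution by contradiction, the other picks a $\mu$-generic $X$ with $N(X,R)\sim c(\mu)R^2$ and quotes Theorem \ref{thm:uniform}. But as written your argument has a genuine gap at its central step, the domination $N(X,R)\le CR^2\bigl(\ell(X)^{-(1+\delta)}+1\bigr)$ for all $R\ge 1$. This is not a known result, and it is stronger than what the available machinery produces: running the Eskin--Masur dyadic argument (as in the paper's proof of Theorem \ref{thm:uniform}) gives $N(X,R)\le CR^2\bigl(1+\ell(X)^{-(1+\delta)}+\alpha(X)\bigr)$, where $\alpha$ is the function from Proposition \ref{prop:ineq}; that $\alpha(X)$ is built from \emph{products} over complexes of several short saddle connections and is in general much larger than $\ell(X)^{-(1+\delta)}$, so the exponent $1+\delta$ cannot simply be granted. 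The good news is that the step is also unnecessary, for the reason you yourself notice at the end: since the Siegel--Veech identity $c(\mu)=\frac{1}{\pi R^2}\int_{\mathcal H} N(\cdot,R)\,d\mu$ holds for \emph{every} $R$, it suffices to use it at one fixed radius, say $R=1$. There the required pointwise bound $N(X,1)\le c_1\,\ell(X)^{-(1+\delta)}$ is exactly Theorem 5.1(a) of \cite{em2001} (stated for saddle connections, and passing to cylinders since every cylinder is bounded by saddle connections); this is precisely the citation the paper uses in Sections \ref{sec:convergence} and \ref{sec:uniform}, and it is also the ``saddle-connection comparison'' that your alternative circle-average route needs --- it is a known theorem, not a remaining difficulty.

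The second gap is in your moment bound: letting $T\to\infty$ in $\int \ell^{-(1+\delta)}d\mu \le \frac{c_0e^{-(1-2\delta)T}}{2\pi}\int\alpha\,d\mu+\frac{b}{2\pi}$ is vacuous unless $\int\alpha\,d\mu<\infty$, which you flag but never establish, and which is not off-the-shelf (Lemma 5.5 of \cite{em2001} controls $\ell^{-(1+\delta)}$-type integrals, not $\alpha$). Two repairs are available. (i) Truncate: apply your invariance/Tonelli computation to $\min\bigl(\ell^{-(1+\delta)},M\bigr)$; the resulting upper integrand $\min\bigl(c_0e^{-(1-2\delta)T}\alpha+b,\,M\bigr)$ is dominated by the constant $M$ and converges pointwise to at most $b$ as $T\to\infty$ (this uses only that $\alpha(X)<\infty$ for each $X$, not integrability), so dominated convergence gives $\int\min\bigl(\ell^{-(1+\delta)},M\bigr)\,d\mu\le b$ for every $M$, and monotone convergence in $M$ finishes. (ii) Alternatively, do what the paper does in Section \ref{sec:convergence}: use Nevo's ergodic theorem to express $\int\ell^{-(1+\delta)}\,d\mu$ through smoothed circle averages at a single $\mu$-generic point, where Proposition \ref{prop:ineq} is applied with the finite number $\alpha(X)$. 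With these two repairs your proof closes, and it is then a genuinely distinct third proof: unlike the paper's arguments it uses neither the equidistribution theorem nor Theorems \ref{thm:conv} and \ref{thm:uniform} as black boxes, only Proposition \ref{prop:ineq}, the Siegel--Veech formula, and Theorem 5.1(a) of \cite{em2001}.
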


We give two different proofs. 

\begin{proof}[Proof via Theorem \ref{thm:conv}]
  Suppose, for the sake of contradiction, that $\mu_n$ is a sequence of ergodic $SL_2(\mathbb{R})$-invariant probability measures on $\mathcal{H}$, with $c(\mu_n) \to \infty$.  By passing to a subsequence, and applying the equidistribution theorem \cite{emm2015} (Corollary 2.5), we can assume that $\mu_n \to \eta$, where $\eta$ is another ergodic $SL_2(\mathbb{R})$-invariant probability measure.  Then Theorem \ref{thm:conv} gives that 
$$\lim_{n\to\infty} c(\mu_n) = c(\eta) < \infty,$$
contradicting our assumption.  
\end{proof}

\begin{proof}[Proof via Theorem \ref{thm:uniform}]
  We claim that for any such $\mu$, $c(\mu) \le c_{\max}$, where $c_{\max}$ is the constant in Theorem \ref{thm:uniform}.  By \cite{em2001}, there exists some $X\in \mathcal{H}$ (in fact the following will hold for $\mu$-a.e. $X$) such that 
$$N(X,R) \sim c(\mu) R^2.$$
Hence by Theorem \ref{thm:uniform}, $c(\mu) \le c_{\max}$.  
\end{proof}

\begin{thm} \label{thm:lowerbound}
  Fix a stratum $\mathcal{H}$.  There exists a bound $\beta>0$ (depending on the stratum) such that for any ergodic $SL_2(\mathbb{R})$-invariant probability measure $\mu$ on $\mathcal{H}$,
$$c(\mu) \ge \beta.$$
\end{thm}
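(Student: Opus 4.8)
The plan is to argue by contradiction, in parallel with the first proof of Theorem~\ref{thm:upperbound}. Suppose the infimum of $c(\mu)$ over all ergodic $SL_2(\mathbb{R})$-invariant probability measures on $\mathcal{H}$ were $0$ (note $c(\mu)\ge 0$, being a limit of nonnegative counts), and choose a minimizing sequence $\mu_n$, so $c(\mu_n)\to 0$. By the equidistribution result \cite{emm2015} (Corollary 2.5), after passing to a subsequence we may assume $\mu_n\to\eta$ in the weak-$*$ topology, with $\eta$ again an ergodic $SL_2(\mathbb{R})$-invariant probability measure; in particular no mass escapes to the boundary. Theorem~\ref{thm:conv} then forces $c(\mu_n)\to c(\eta)$, so $c(\eta)=0$. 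Everything therefore reduces to showing that a single such measure cannot have vanishing Siegel-Veech constant.

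Thus the crux is to prove that $c(\eta)>0$ for an arbitrary ergodic $SL_2(\mathbb{R})$-invariant probability measure $\eta$, and I would do this through the Siegel-Veech transform. Because $\eta$ is $SL_2(\mathbb{R})$-invariant and $SL_2(\mathbb{R})$ acts transitively on $\mathbb{R}^2\setminus\{0\}$ (with unipotent stabilizer), the measure $B\mapsto\int_{\mathcal{H}}\#\{\text{cylinders of }X\text{ with holonomy in }B\}\,d\eta(X)$ on $\mathbb{R}^2\setminus\{0\}$ is a constant multiple of Lebesgue measure, and this constant is precisely a fixed positive multiple of $c(\eta)$ (the multiple absorbing the normalization of $\pi$). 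Hence it suffices to produce one bounded Borel set $B$, bounded away from the origin, for which this integral is strictly positive.

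To produce such a $B$, first I would pick any $X_0$ in the (closed, invariant) support of $\eta$. By Masur's quadratic lower bound (\cite{masur1988}, \cite{masur1990}) we have $N(X_0,R)\to\infty$, so $X_0$ carries at least one cylinder, with some nonzero core holonomy $v_0$. Let $B$ be a small open ball around $v_0$ avoiding the origin. The core-curve holonomy of a cylinder is a period, hence varies continuously in period coordinates, and a cylinder persists under any deformation small relative to its height; so there is an open neighborhood $W$ of $X_0$ on which this cylinder survives with holonomy still in $B$. Then $\#\{\text{cylinders with holonomy in }B\}\ge 1$ throughout $W$, and since $X_0$ lies in the support, $\eta(W)>0$. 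Therefore the integral above is at least $\eta(W)>0$, giving $c(\eta)>0$ and the desired contradiction; the uniform bound $\beta=\inf_\mu c(\mu)>0$ follows.

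I expect the positivity step to be the only real obstacle, the compactness and convergence inputs being quoted directly: one must pin down (i) that the holonomy of a fixed cylinder depends continuously on the surface and (ii) that the cylinder genuinely persists on an \emph{open} set, uniformly enough to keep its holonomy inside the fixed ball $B$. Both are standard facts about flat surfaces but deserve a careful statement. I would also remark that, unlike the upper bound, I do not expect a second proof via a pointwise estimate: a uniform lower bound $N(X,R)\ge\beta R^2$ valid for \emph{all} $X$ seems out of reach, since the pointwise constant can degenerate as the geometry of $X$ degenerates, whereas the measure-theoretic argument above is insensitive to this.
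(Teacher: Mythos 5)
Your proposal has the same skeleton as the paper's proof: argue by contradiction from a sequence with $c(\mu_n)\to 0$, use the equidistribution result (\cite{emm2015}, Corollary 2.5) to pass to a weak-* convergent subsequence $\mu_n\to\eta$ with $\eta$ ergodic and $SL_2(\mathbb{R})$-invariant, and apply Theorem \ref{thm:conv} to reduce everything to showing $c(\eta)>0$ for a single ergodic invariant measure. Where you genuinely diverge is in that positivity step. The paper dispatches it in one line: by \cite{em2001} the constant $c(\eta)$ is the quadratic growth rate of $N(X,R)$ for $\eta$-almost every $X$, and by Masur's quadratic \emph{lower} bound (\cite{masur1988}) that growth rate is positive for every surface. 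You instead argue through the Siegel--Veech formula: the holonomy measure $B\mapsto\int_{\mathcal{H}}\#\{\text{cylinders of }X\text{ with holonomy in }B\}\,d\eta(X)$ is $c(\eta)$ times Lebesgue measure (this is precisely the paper's defining formula for $c(\mu)$ in Section \ref{sec:convergence}, so your invariance argument is consistent with its normalization), and you get positivity by choosing $X_0$ in the support of $\eta$, one cylinder on $X_0$, and an open neighborhood $W$ with $\eta(W)>0$ on which that cylinder persists with holonomy in a fixed ball $B$. This is correct, and there is a real trade-off between the two routes: the paper needs the full strength of Masur's quadratic lower bound but nothing about deformations; you need only the existence of a single cylinder (also due to Masur), but your argument hinges on the persistence lemma you flag in (i)--(ii) --- that a cylinder of height $h$ survives, with continuously varying core holonomy, on all surfaces sufficiently close to $X_0$ in period coordinates. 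That lemma is true and standard (for instance, develop the image of the cylinder under the piecewise-affine comparison map into $\mathbb{R}^2$; since the derivative is close to the identity one finds a straight line invariant under translation by the perturbed period of the core class, which descends to a regular closed geodesic in the nearby surface), but it is nowhere proved or cited in the paper, and it is the one step of your argument that requires genuine work: the size of the neighborhood $W$ must shrink as the cylinder's height does, so the soft statement ``cylinders vary continuously'' needs to be made quantitative before $W$ can be taken open. With that lemma supplied, your proof is complete and is a legitimate alternative to the paper's.
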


\begin{proof} 
  The proof strategy is the same as that of the proof of Theorem \ref{thm:upperbound} via Theorem \ref{thm:conv} above.   

Suppose, for the sake of contradiction, that $\mu_n$ is a sequence of ergodic $SL_2(\mathbb{R})$-invariant probability measures on $\mathcal{H}$, with $c(\mu_n) \to 0$.  By passing to a subsequence, and applying the equidistribution theorem \cite{emm2015} (Corollary 2.5), we can assume that $\mu_n \to \eta$, where $\eta$ is another ergodic $SL_2(\mathbb{R})$-invariant probability measure.  Then Theorem \ref{thm:conv} gives that 
$$\lim_{n\to\infty} c(\mu_n) = c(\eta),$$
which we claim is positive.  Indeed, by \cite{em2001}, the Siegel-Veech constant gives the quadratic growth rate of cylinders for an $\eta$ typical surface, and by \cite{masur1988}, the constant must be positive.   
Since we started by assuming the limit is zero, we have a contradiction.    
\end{proof}

\subsection{The set of Siegel-Veech constants}

Using Theorem \ref{thm:conv}, we can easily prove several results showing that the set of Siegel-Veech constants of all the measures for a fixed stratum is not too complicated.  
\begin{thm}
  Fix a stratum $\mathcal{H}$.  Let 
$$S(\mathcal{H}) = \{ c(\mu) : \mu \text{ an ergodic } SL_2(\mathbb{R}) \text{-invariant probability measure on } \mathcal{H}\}.$$
Then $S(\mathcal{H})$ is closed as a subset of $\mathbb{R}$.    
\end{thm}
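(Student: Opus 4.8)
The plan is to mimic the strategy of the boundedness results (Theorems \ref{thm:upperbound} and \ref{thm:lowerbound}): use the equidistribution result to extract a convergent subsequence of measures with a good limit, and then apply Theorem \ref{thm:conv} to transfer convergence from the level of measures to the level of Siegel-Veech constants. Concretely, to prove closedness I would take an arbitrary limit point $c$ of $S(\mathcal{H})$, pick a sequence $c_n \in S(\mathcal{H})$ with $c_n \to c$ in $\R$, and exhibit an ergodic $SL_2(\R)$-invariant probability measure $\mu$ with $c(\mu) = c$. By the definition of $S(\mathcal{H})$, each $c_n = c(\mu_n)$ for some ergodic $SL_2(\R)$-invariant probability measure $\mu_n$ on $\mathcal{H}$.

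Next I would invoke Eskin-Mirzakhani-Mohammadi \cite{emm2015} (Corollary 2.5), exactly as in the proof of Theorem \ref{thm:upperbound}: after passing to a subsequence, I may assume $\mu_n \to \eta$ in the weak-* topology, where $\eta$ is again an ergodic $SL_2(\R)$-invariant probability measure. This step carries the real content of the argument — it guarantees both that no mass escapes to infinity (so the limit is a genuine probability measure) and that the limit is again ergodic and $SL_2(\R)$-invariant, so that its Siegel-Veech constant $c(\eta)$ is defined and lies in $S(\mathcal{H})$.

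Finally I would apply Theorem \ref{thm:conv} to this convergent subsequence: since $\mu_n \to \eta$ with all measures ergodic $SL_2(\R)$-invariant probability measures, we obtain $c(\mu_n) \to c(\eta)$. But $c(\mu_n) = c_n \to c$ along the subsequence, so $c = c(\eta) \in S(\mathcal{H})$. Hence every limit point of $S(\mathcal{H})$ belongs to $S(\mathcal{H})$, which is precisely the assertion that $S(\mathcal{H})$ is closed in $\R$.

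The only delicate point is the invocation of Corollary 2.5 to secure a subsequential limit that is simultaneously a probability measure (no loss of mass) and ergodic; without ergodicity of $\eta$ its Siegel-Veech constant would not even be defined in the sense used here. Once that compactness statement is in hand, Theorem \ref{thm:conv} does all the work and the conclusion is immediate, so I expect no further obstacle.
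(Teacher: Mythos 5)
Your proposal is correct and uses the same two ingredients the paper relies on: the Eskin--Mirzakhani--Mohammadi equidistribution result (Corollary 2.5) to extract a weak-* convergent subsequence whose limit is again an ergodic $SL_2(\mathbb{R})$-invariant probability measure, followed by Theorem \ref{thm:conv} to transfer convergence of measures to convergence of Siegel--Veech constants. The paper merely packages this differently, deducing the theorem as the $n=0$, $\mathcal{N}=\mathcal{H}$ case of the more general Proposition \ref{prop:dim} (whose added generality about $S_n(\mathcal{N})$ forces a minimality argument over affine invariant submanifolds that your special case does not need); specialized to $S(\mathcal{H})$, the paper's argument is essentially yours.
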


This will follow as the $n=0$, $\mathcal{N}=\mathcal{H}$ case of Proposition \ref{prop:dim} below.    

By Eskin-Mirzakhani-Mohammadi \cite{emm2015}, the ergodic $SL_2(\mathbb{R})$-invariant probability measures on $\mathcal{H}$ are in bijection with the set of affine invariant submanifolds $\mathcal{M}$ of $\mathcal{H}$. We define $c(\mathcal{M})$ to equal $c(\mu)$, where $\mu$ is the measure corresponding to $\mathcal{M}$.  

\begin{prop}
   Fix an affine invariant submanifold $\mathcal{N}$ of $\mathcal{H}$.  Let
$$S_n(\mathcal{N}) = \{ c(\mathcal{M}) : \mathcal{M} \subset \mathcal{N}, \dim_{\mathbb{C}}\mathcal{M} \ge n \} .$$
Then $S_n(\mathcal{N})$ is closed as a subset of $\mathbb{R}$. 
\label{prop:dim}
\end{prop}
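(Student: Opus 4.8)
The plan is to show directly that $S_n(\mathcal{N})$ contains all of its limit points. So I would begin with a sequence of affine invariant submanifolds $\mathcal{M}_k \subset \mathcal{N}$, each satisfying $\dim_{\mathbb{C}}\mathcal{M}_k \ge n$, together with a value $L \in \mathbb{R}$ such that $c(\mathcal{M}_k) \to L$. The goal is to produce a single affine invariant submanifold $\mathcal{M}_\infty \subset \mathcal{N}$ with $\dim_{\mathbb{C}}\mathcal{M}_\infty \ge n$ and $c(\mathcal{M}_\infty) = L$, which exhibits $L$ as an element of $S_n(\mathcal{N})$.

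First I would pass to the ergodic measures $\mu_k$ associated to the $\mathcal{M}_k$ and invoke the equidistribution and compactness result of Eskin-Mirzakhani-Mohammadi (\cite{emm2015}, Corollary 2.5). After passing to a subsequence, this yields weak-* convergence $\mu_k \to \eta$, where $\eta$ is again an ergodic $SL_2(\mathbb{R})$-invariant probability measure corresponding to some affine invariant submanifold $\mathcal{M}_\infty$. The crucial additional output of that corollary is the containment $\mathcal{M}_k \subseteq \mathcal{M}_\infty$ for all sufficiently large $k$.

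From here the three required properties of $\mathcal{M}_\infty$ follow in turn. The containment $\mathcal{M}_k \subseteq \mathcal{M}_\infty$ immediately gives $\dim_{\mathbb{C}}\mathcal{M}_\infty \ge \dim_{\mathbb{C}}\mathcal{M}_k \ge n$. Since each $\mu_k$ is supported on $\mathcal{M}_k \subseteq \mathcal{N}$, and $\mathcal{N}$, being the image of a proper immersion, is a closed subset of $\mathcal{H}$, the weak-* limit $\eta$ is supported on $\mathcal{N}$ as well; hence $\mathcal{M}_\infty = \operatorname{supp}(\eta) \subseteq \mathcal{N}$. Finally, Theorem \ref{thm:conv} gives $c(\mu_k) \to c(\eta)$ along the subsequence, and since $c(\mu_k) \to L$ we conclude $c(\mathcal{M}_\infty) = c(\eta) = L$. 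Therefore $L \in S_n(\mathcal{N})$, which proves that $S_n(\mathcal{N})$ is closed.

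The step I expect to carry all the weight is the dimension lower bound $\dim_{\mathbb{C}}\mathcal{M}_\infty \ge n$. A priori, nothing prevents the dimension from dropping under a weak-* limit of measures, and it is precisely the containment clause in Eskin-Mirzakhani-Mohammadi's corollary — that the limiting affine invariant submanifold swallows the submanifolds in the tail of the sequence — that forces the dimension to be nondecreasing in the limit. The remaining ingredients, namely closedness of $\mathcal{N}$ (to trap $\mathcal{M}_\infty$ inside it) and Theorem \ref{thm:conv} (to identify the limiting constant as $L$), are then essentially formal. Note that taking $n=0$ and $\mathcal{N}=\mathcal{H}$ renders the dimension condition vacuous and the containment in $\mathcal{N}$ automatic, recovering the closedness of $S(\mathcal{H})$ as the content-free special case.
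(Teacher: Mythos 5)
Your proof is correct, and it reaches the conclusion by a more direct route than the paper's. Both arguments rest on the same two inputs---the equidistribution/compactness result of Eskin-Mirzakhani-Mohammadi (\cite{emm2015}, Corollary 2.5) and Theorem \ref{thm:conv}---but the paper organizes the middle step differently: it first builds a candidate limit by hand, taking a minimal element $\mathcal{M}$ (with respect to inclusion) of the collection of affine invariant submanifolds containing infinitely many of the $\mathcal{M}_k$ (a minimal element exists since chains have length at most $\dim_{\mathbb{C}}\mathcal{N}$), and then uses Corollary 2.5 together with this minimality to argue that a subsequence of the $\mu_k$ converges to precisely the measure associated to $\mathcal{M}$; the dimension bound and the containment in $\mathcal{N}$ then come for free from the choice of $\mathcal{M}$. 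You instead let Corollary 2.5 produce the limit $\mathcal{M}_\infty$ and verify its admissibility after the fact: the containment clause $\mathcal{M}_k \subseteq \mathcal{M}_\infty$ for large $k$ gives $\dim_{\mathbb{C}}\mathcal{M}_\infty \ge n$, and closedness of $\mathcal{N}$ (from properness of the immersion) traps the support of the weak-* limit, hence $\mathcal{M}_\infty$ itself, inside $\mathcal{N}$. Note that the paper's minimality device does not spare it the containment clause: that clause is still needed there to know that the subsequential limit contains infinitely many of the $\mathcal{M}_k$, so that minimality can identify it with $\mathcal{M}$. So your version genuinely shortens the argument rather than hiding a dependency; indeed it is essentially the argument the paper itself deploys later in bounding the rank of $S_n(\mathcal{N})$, where the containment clause of Corollary 2.5 is invoked directly. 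The only thing the minimality construction buys is the slightly stronger conclusion that the limit submanifold contains infinitely many of the original $\mathcal{M}_k$, not merely a tail of one subsequence---precision that is not needed for closedness.
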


\begin{proof}
  Suppose $x\in \mathbb{R}$ and $x=\lim_{k\to\infty} c(\mathcal{M}_k)$ for some sequence $\mathcal{M}_k$ of affine invariant submanifolds, corresponding to measures $\mu_k$.  Consider the set of all affine invariant submanifolds that contain infinitely many of the $\mathcal{M}_k$, and pick an element $\mathcal{M}$ that is minimal (with respect to inclusion) in this set.  The set is non-empty (since $\mathcal{N}$ is in it), and a minimal element exists because the longest chain (with respect to inclusion) has cardinality at most $\dim_{\mathbb{C}}(\mathcal{N}) < \infty$.  Note that $\dim_{\mathbb{C}} \mathcal{M} \ge n$, hence $c(\mathcal{M}) \in S_n(\mathcal{N})$.  Now by equidistribution (\cite{emm2015}, Corollary 2.5), we can find a subsequence $j_k$ such that $\mu_{j_k}$ converges to $\mu$, where $\mu$ is the measure corresponding to $\mathcal{M}$.  

By Theorem \ref{thm:conv}, $c(\mathcal{M}) = c(\mu) = \lim_{k\to\infty} c(\mu_{j_k} ) = \lim_{k\to\infty}c(\mathcal{M}_{j_k})=x$.  Hence $x\in S_n(\mathcal{N})$, and we are done.  
\end{proof}

Given a closed $X\subset \mathbb{R}$, we define the \emph{derived set} $X^*$ to be the set obtained from $X$ by removing all the isolated points.  We let $X^{*n}=( \cdots (X^*)^* \cdots )^*$, where there are $n$ occurrences of $*$.  We define the \emph{rank}, $\operatorname{rank}(X)$, to be the smallest $n$ for which $X^{*n}=\{\}$; if no such $n$ exists, we declare the rank to be infinity.  (Note that this is a slight variation of the usual notion of \emph{Cantor-Bendixson rank}.)

\begin{thm}
  Let $\mathcal{N} \subset \mathcal{H}$ be an affine invariant submanifold with $\dim_{\mathbb{C}} \mathcal{N}=d$.  Then 
$$\operatorname{rank}S_n(\mathcal{N}) \le d-n+1.$$
  In particular, $\operatorname{rank} S(\mathcal{H})\le \dim_{\mathbb{C}} \mathcal{H}+1$.  
\end{thm}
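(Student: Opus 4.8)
The plan is to prove the statement by induction on $n$, proceeding downward from $n = d+1$, and to bootstrap the closedness result of Proposition \ref{prop:dim} into a statement about derived sets. The key structural observation is that $S_n(\mathcal{N})$ is a decreasing family in $n$: since $\{\mathcal{M} \subset \mathcal{N} : \dim_\mathbb{C}\mathcal{M} \ge n+1\} \subset \{\mathcal{M} \subset \mathcal{N} : \dim_\mathbb{C}\mathcal{M} \ge n\}$, we have $S_{n+1}(\mathcal{N}) \subset S_n(\mathcal{N})$. Moreover, because any affine invariant submanifold $\mathcal{M} \subset \mathcal{N}$ satisfies $\dim_\mathbb{C}\mathcal{M} \le d$, the set $S_n(\mathcal{N})$ is empty once $n > d$, so $S_{d+1}(\mathcal{N}) = \{\}$, and $S_n(\mathcal{N})$ is closed for every $n$ by Proposition \ref{prop:dim}.

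The heart of the argument is to show that the derived set operation decreases the dimension threshold, i.e.
\begin{equation*}
  \bigl(S_n(\mathcal{N})\bigr)^* \subset S_{n+1}(\mathcal{N}).
\end{equation*}
To see this, I would take a non-isolated point $x \in S_n(\mathcal{N})$, so $x = \lim_{k\to\infty} c(\mathcal{M}_k)$ for distinct affine invariant submanifolds $\mathcal{M}_k \subset \mathcal{N}$ with $\dim_\mathbb{C}\mathcal{M}_k \ge n$ and $c(\mathcal{M}_k)$ all distinct from $x$ (hence the $\mathcal{M}_k$ are genuinely distinct manifolds). Running the minimality argument from the proof of Proposition \ref{prop:dim}, I select a minimal affine invariant submanifold $\mathcal{M}$ containing infinitely many of the $\mathcal{M}_k$, with $c(\mathcal{M}) = x$ by Theorem \ref{thm:conv} and the equidistribution result. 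The crucial point is that $\mathcal{M}$ must strictly contain infinitely many distinct proper sub-submanifolds $\mathcal{M}_{j_k}$, which forces $\dim_\mathbb{C}\mathcal{M} \ge \dim_\mathbb{C}\mathcal{M}_{j_k} + 1 \ge n+1$; otherwise $\mathcal{M}$ would equal one of the $\mathcal{M}_{j_k}$ for large $k$, contradicting either minimality or the fact that $x$ is a limit of values distinct from $c(\mathcal{M}_{j_k})$. Thus $c(\mathcal{M}) = x \in S_{n+1}(\mathcal{N})$.

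Granting this inclusion, the bound follows by iterating: applying the derived-set operation $d - n + 1$ times gives
\begin{equation*}
  \bigl(S_n(\mathcal{N})\bigr)^{*(d-n+1)} \subset S_{n + (d-n+1)}(\mathcal{N}) = S_{d+1}(\mathcal{N}) = \{\},
\end{equation*}
which says precisely that $\operatorname{rank} S_n(\mathcal{N}) \le d - n + 1$. The special case $n = 0$, $\mathcal{N} = \mathcal{H}$ yields $\operatorname{rank} S(\mathcal{H}) \le \dim_\mathbb{C}\mathcal{H} + 1$.

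I expect the main obstacle to lie in verifying the dimension-jump claim rigorously, namely that a non-isolated limit of Siegel-Veech constants must be realized by a submanifold of strictly larger dimension than the threshold. The subtlety is that the $\mathcal{M}_k$ converging to give the limit point must remain genuinely distinct as manifolds (not merely distinct as real numbers after applying $c$), and one must rule out the degenerate scenario where the minimal enveloping $\mathcal{M}$ coincides with infinitely many of the $\mathcal{M}_k$. Handling this requires care in how one passes to subsequences and invokes minimality, ensuring that the strict containment $\mathcal{M}_{j_k} \subsetneq \mathcal{M}$ holds for infinitely many $k$ so that the complex-dimension inequality is strict; the finiteness of chains of affine invariant submanifolds (bounded by $\dim_\mathbb{C}\mathcal{N}$) is what ultimately makes this work.
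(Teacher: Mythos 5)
Your proposal is correct and takes essentially the same route as the paper: the heart in both cases is the inclusion $\bigl(S_n(\mathcal{N})\bigr)^* \subset S_{n+1}(\mathcal{N})$, proved by selecting a minimal affine invariant submanifold containing infinitely many of the $\mathcal{M}_k$, applying equidistribution together with Theorem \ref{thm:conv} to get $c(\mathcal{M})=x$, and using properness of the immersions to force the dimension jump. The only difference is bookkeeping: the paper runs a downward induction on $n$ using $\operatorname{rank}(X)\le \operatorname{rank}(X^*)+1$ and monotonicity of rank, while you iterate the inclusion $d-n+1$ times down to the empty set $S_{d+1}(\mathcal{N})$, which is an equivalent repackaging.
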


\begin{proof}
  We argue by downwards induction on $n$.  

The base case is $n=d$.  Here the only affine invariant submanifold of $\mathcal{N}$ with dimension at least $d$ is $\mathcal{N}$ itself.  Thus $S_d(\mathcal{N})=\{c(\mathcal{N})\}$, which has rank $1$, so this gives the base case.  

For the inductive step assume the result for $n$. 

 We claim that $S_{n-1}(\mathcal{N})^* \subset S_n(\mathcal{N})$.  To see this, let $x\in S_{n-1}(\mathcal{N})^*$, which means we can find a sequence of distinct $\mathcal{M}_i$ of dimension at least $n-1$ with $\lim_i c(\mathcal{M}_i) = x$.  As in the proof of Proposition \ref{prop:dim}, using \cite{emm2015} (Corollary 2.5) we can find $\mathcal{N}$ containing all $\mathcal{M}_{j_i}$ for some subsequence $j_i$, and $c(\mathcal{N}) = \lim_i c(\mathcal{M}_{j_i}) =x$.  Since $\mathcal{N}$ contains distinct manifolds of dimension at least $n-1$, it must have dimension at least $n$ (this uses the fact that affine invariant submanifolds come from \emph{proper} immersions).  So $x=c(\mathcal{N}) \in S_n(\mathcal{N})$.  This completes the proof of the claim.  

Now for any closed sets $A\subset B \subset \mathbb{R}$, it follows immediately from our definition that $\operatorname{rank}(A) \le \operatorname{rank}(B)$.  Then, from the definition of rank, this fact, the claim above, and the inductive assumption,
$$\operatorname{rank}(S_{n-1}(\mathcal{N})) \le \operatorname{rank}(S_{n-1}(\mathcal{N})^*)+1 \le \operatorname{rank}(S_n(\mathcal{N})) +1 \le (d-n+1) + 1 = d-(n-1)+1,$$
which completes the induction.  
\end{proof}

\section{Proof of Theorem \ref{thm:conv} (Convergence of Siegel-Veech constants)}
\label{sec:convergence}

\begin{proof}[Proof of Theorem \ref{thm:conv}] 
The proof would be immediate if strata were compact, but they are not.  The recurrence-type result Proposition \ref{prop:ineq} allows us to get around this.    

The Siegel-Veech constant can be defined by 
$$c(\mu) = \frac{\int_{\mathcal{H}} \hat f d\mu }{\int_{\mathbb{R}^2}f d\lambda},$$
for any $f:\mathbb{R}^2\to \mathbb{R}$ continuous and compactly supported.  Here $\hat f:\mathcal{H} \to \mathbb{R}$ is the Siegel-Veech transform of $f$ given by 
$$\hat f(X) := \sum_{c\in \Lambda(X)} f(c),$$
where $\Lambda(X)\subset \mathbb{R}^2$ is the multi-set of \emph{holonomies} of cylinders.  The holonomy of a saddle connection $s$ is the element of $\mathbb{C}$ (which we identify with $\mathbb{R}^2$) given by integrating the 1-form $\omega$ along any of the periodic geodesics defining the cylinder.

Hence to prove Theorem \ref{thm:conv}, it suffices to show that 
$$\lim_{n\to\infty} \int_{\mathcal{H}} \hat f d\mu_n = \int_{\mathcal{H}} \hat f d\eta $$
for all such $f$.  Note that if $\hat f$ were compactly supported, this would follow immediately from the definition of weak-* convergence.  The idea is to approximate $\hat f$ by compactly supported functions, and bound the integral of the error term using integrability results from \cite{em2001}.  The key point is that we need a bound for the error term that is independent of the particular measure $\mu$.  

Let $C_K= \{X\in \mathcal{H} : \frac{1}{\ell(X)} \le K\}$.  These sets are compact.  Now let $\chi_K$ be a continuous function $\mathcal{H}\to [0,1]$ whose value is $1$ on $C_K$ and $0$ on $\mathcal{H} \backslash C_{K+1}$.  Define $\hat f_K = \hat f \cdot \chi_K$.  Note that $\hat f_K$ is compactly supported, hence
$$\lim_{n\to\infty} \int_{\mathcal{H}} \hat f_K d\mu_n = \int_{\mathcal{H}} \hat f_K d\eta.$$

It remains to show that by choosing $K$ large, we can make $\left|\int_{\mathcal{H}} \hat f d\mu - \int_{\mathcal{H}} \hat f_K d\mu\right|$ uniformly small for any choice of ergodic $SL_2(\mathbb{R})$-invariant probability measure $\mu$.  We can assume $f$, and hence $\hat f$, are non-negative, since we only need to show the equality for some $f$ for which $\int_{\mathcal{H}} \hat f d\eta$ is non-zero.  Since $f$ is compactly supported, it is dominated by some multiple of an indicator function of a large ball.  By \cite{em2001} Theorem 5.1(a), it follows that $\hat f < C/\ell^{1+\delta}$ for some $C$ and $0< \delta<1/2$ (the cited result is about counts of saddle connections, but since every cylinder is bounded by saddle connections, we get the same bound for cylinders).  Then
\begin{align}
\left|\int_{\mathcal{H}} \hat f d\mu - \int_{\mathcal{H}} \hat f_K d\mu\right| &\le \int_{\mathcal{H}\backslash C_K} \hat f d\mu \le C \int_{\mathcal{H}\backslash C_K} \frac{1}{\ell^{1+\delta}} d\mu \\
&\le C \frac{1}{K^{\delta'}} \int _{\mathcal{H}\backslash C_K} \frac{1}{\ell^{1+\delta+\delta'}} d\mu, \label{eq:error_inter}
\end{align}
where we choose $\delta'>0$ such that $\delta+\delta'<1/2$.  We introduce this extra $\delta'$ to get the $\frac{1}{K^{\delta'}}$ term in the above, which will allow us to get decay as $K\to \infty$.  By \cite{em2001} Lemma 5.5, the last integral above is finite; we need to show the somewhat stronger statement that it is bounded from above independent of the choice of $\mu$.  

The idea is to replace the integral of $\frac{1}{\ell^{1+\delta+\delta'}}$ over the whole stratum by the integral over large circles centered at a $\mu$-generic point, using Nevo's theorem, and then use Proposition \ref{prop:ineq} to bound the integrals over circles.  To apply Nevo's theorem, we need to choose a smoothing function $\phi : \mathbb{R} \to \mathbb{R}$ that is non-negative, smooth, and compactly supported (having the smoothing is fine for our purposes; Nevo's result should also be true without having to smooth).  

Now by Nevo's theorem (see \cite{em2001} Theorem 1.5) and Proposition \ref{prop:ineq}, for $\mu$ a.e. $X\in \mathcal{H}$,
\begin{align*}
\int_{\mathcal{H}} \frac{1}{\ell^{1+\delta+\delta'}} d\mu \cdot \int_{-\infty}^{\infty} \phi(t)dt &= \lim_{\tau\to\infty}\int_{-\infty}^{\infty} \phi(\tau-t) \left( \frac{1}{2\pi} \int_0^{2\pi} \frac{1}{\ell(g_t r_{\theta}X)^{1+\delta+\delta'}} d\theta  \right) dt \\
& \le b \int_{-\infty}^{\infty} \phi(t)dt. 
\end{align*}
Hence $\int_{\mathcal{H}} \frac{1}{\ell^{1+\delta+\delta'}} d\mu \le b$, for any $\mu$.  Plugging into (\ref{eq:error_inter}) gives, for all $\mu$,
\begin{align}
  \label{eq:error}
  \left|\int_{\mathcal{H}} \hat f d\mu - \int_{\mathcal{H}} \hat f_K d\mu\right|  \le C\cdot \frac{b}{K^{\delta'}}. 
\end{align}

Now we put everything together.  Fix $\epsilon>0$. Choose $K$ large so that $C~\cdot~ b/K^{\delta'} < \epsilon/3$. Now choose $N$ such that 
\begin{align}
  \label{eq:cutoff}
  \left|\int_{\mathcal{H}} \hat f_K d\mu_n - \int_{\mathcal{H}} \hat f_K d\eta\right| <\epsilon/3
\end{align}
for all $n\ge N$.  

Then by the triangle inequality, and (\ref{eq:error}), (\ref{eq:cutoff}), for $n\ge N$,
\begin{align*}
\left| \int_{\mathcal{H}} \hat f d\mu_n -  \int_{\mathcal{H}} \hat f d\eta \right| &\le \left| \int_{\mathcal{H}} \hat f d\mu_n -  \int_{\mathcal{H}} \hat f_K d\mu_n \right| + \left| \int_{\mathcal{H}} \hat f_K d\mu_n -  \int_{\mathcal{H}} \hat f_K d\eta \right| + \left| \int_{\mathcal{H}} \hat f_K d\eta -  \int_{\mathcal{H}} \hat f d\eta \right| \\
&\le C\cdot b /K^{\delta'} +\epsilon/3 + C\cdot b /K^{\delta'} < 3(\epsilon/3) = \epsilon,
\end{align*}
and we are done.  
\end{proof}

\begin{rmk}
  Some similar ideas appear in the proof of Theorem 2.4 in \cite{ems2003}. 
\end{rmk}

\section{Proof of Theorem \ref{thm:uniform} (Uniform asymptotic quadratic upper bound)}
\label{sec:uniform}

\begin{proof}[Proof of Theorem \ref{thm:uniform}]
The proof is a modification of the Eskin-Masur proof of Masur's original (non-uniform) quadratic upper bound (Theorem 5.4 in \cite{em2001}; the result was originally proved in \cite{masur1990}). 

By \cite{em2001} Proposition 3.5, there is an absolute constant $c$ such that 
$$N(X,2R)-N(X,R) \le c R^2 \int_0^{2\pi} N(g_{\log R}r_{\theta}X,4)d\theta.$$
The proof of this involves taking the indicator function of the trapezoid with vertices $(\pm 2,2), (\pm 1,1)$, and considering its Siegel-Veech transform.  Applying $g_t$ to the trapezoid makes it long and thin, and then rotating it around using $r_{\theta}$ allows one to count saddle connections whose holonomy has absolute value between $R$ and $2R$.  The right-hand side involves a radius of $4$ since the original trapezoid is contained in a ball of radius $4$.

Now we apply Theorem 5.1(a) in \cite{em2001}, and then Proposition \ref{prop:ineq} to get
  \begin{eqnarray*}
    N(X,2R)-N(X,R) &\le & c R^2 \int_0^{2\pi} N(g_{\log R}r_{\theta} X, 4) d\theta \\
&\le & c R^2 \int_0^{2\pi} \frac{c_1}{\ell(g_{\log R} r_\theta X)^{1+\delta}} d\theta \\
&\le & c c_1 R^2 \left(c_0e^{-(\log R)(1-2\delta)} \alpha(X) + b\right).
  \end{eqnarray*}
The constants $c, c_1,c_0,b$ do not depend on $X$ or $R$.   For $R\ge R_0(X)$ large, we can make $c_0e^{-(\log R )(1-2\delta)}\alpha(X) ~< ~b$, and so we get 
$$N(X,2R) - N(X,R) \le 2bcc_1 R^2.$$  A straight-forward geometric series argument then gives the desired inequality.    

We also see that the function $R_0(X)$ can be chosen to depend only on $\alpha(X)$ (in an explicit way).  The function $\alpha(X)$ is itself bounded by an explicit function of $\ell(X)$ (and the genus of the stratum).  
\end{proof}

\section{Open questions on extremal Siegel-Veech constants}
\label{sec:bound}

\begin{question}
  Given $\mathcal{H}$, what is $\sup c(\mu)$, where the sup ranges over all ergodic $SL_2(\mathbb{R})$-invariant probability measures on $\mathcal{H}$?  In particular, what are the asymptotics of this quantity as the genus of the stratum tends to infinity?  
\end{question}

We can show that the asymptotic growth rate of $\sup c(\mu)$ is somewhere between quadratic and exponential, as a function of genus (at least along some sequence of strata with genus tending to infinity).  We now explain how to get these upper and lower bounds. 

By taking branched covers of a fixed translation surface in which the preimages of all the singular points are branch points, we can exhibit a family of surfaces with genus $g\to\infty$ for which the number of cylinders of length at most $R$ is at least $kg^2R^2$ for some fixed constant $k>0$.  This shows that $\sup c(\mu)$ for these stratum is at least $kg^2$.  

On the other hand, by Theorem \ref{thm:upperbound}, the supremum is finite, and in fact any constant $c_{\max}$ for which Theorem \ref{thm:uniform} holds gives an upper bound for the supremum.   If we keep track of the $c_{\max}$ coming from the proof of Theorem \ref{thm:uniform}, we find it grows at most exponentially in the genus, but it seems hard to do better than exponential using our method.   The exponential nature of the method arises from an induction on the complexity of certain ``complexes'' of saddle connections in the proof of the generalization of Proposition \ref{prop:ineq} given in \cite{dozier2017}.

\begin{rmk}
If we work with quadratic differentials and allow simple poles (corresponding to points with cone angle $\pi$), then it is not clear that there is any bound for the analogue of $c_{\max}$ that depends only on the genus, since there are infinitely many strata in a given genus. 
\end{rmk}

The corresponding question about the smallest Siegel-Veech constant in each stratum is also interesting. 

\begin{question}
  Given $\mathcal{H}$, what is $\inf c(\mu)$, where the inf ranges over all ergodic $SL_2(\mathbb{R})$-invariant probability measures on $\mathcal{H}$?  In particular, what are the asymptotics of this quantity as the genus of the stratum tends to infinity?  
\end{question}

By Theorem \ref{thm:lowerbound}, for each stratum, $\inf c(\mu) >0$.    

\bibliography{sources}{}

\providecommand{\bysame}{\leavevmode\hbox to3em{\hrulefill}\thinspace}
\providecommand{\MR}{\relax\ifhmode\unskip\space\fi MR }
\providecommand{\MRhref}[2]{%
  \href{http://www.ams.org/mathscinet-getitem?mr=#1}{#2}
}
\providecommand{\href}[2]{#2}
\begin{thebibliography}{EMWM06}

\bibitem[Ath06]{athreya2006}
Jayadev~S. Athreya, \emph{Quantitative recurrence and large deviations for
  {T}eichmuller geodesic flow}, Geom. Dedicata \textbf{119} (2006), 121--140.
  \MR{2247652}

\bibitem[Bai07]{bainbridge2007}
Matt Bainbridge, \emph{Euler characteristics of {T}eichm\"uller curves in genus
  two}, Geom. Topol. \textbf{11} (2007), 1887--2073. \MR{2350471}

\bibitem[Bai10]{bainbridge2010}
\bysame, \emph{Billiards in {L}-shaped tables with barriers}, Geom. Funct.
  Anal. \textbf{20} (2010), no.~2, 299--356. \MR{2671280}

\bibitem[BSW16]{bsw2016}
M.~{Bainbridge}, J.~{Smillie}, and B.~{Weiss}, \emph{{Horocycle dynamics: new
  invariants and eigenform loci in the stratum H(1,1)}}, arXiv:1603.00808
  (2016).

\bibitem[{Doz}17]{dozier2017}
B.~{Dozier}, \emph{{Equidistribution of saddle connections on translation
  surfaces}}, arXiv:1705.10847 (2017).

\bibitem[EM01]{em2001}
Alex Eskin and Howard Masur, \emph{Asymptotic formulas on flat surfaces},
  Ergodic Theory Dynam. Systems \textbf{21} (2001), no.~2, 443--478.
  \MR{1827113}

\bibitem[EM13]{em2013}
Alex {Eskin} and Maryam {Mirzakhani}, \emph{{Invariant and stationary measures
  for the {${\rm SL}_2(\Bbb R)$} action on Moduli space}}, arXiv:1302.3320
  (2013).

\bibitem[EMM98]{emm1998}
Alex Eskin, Gregory Margulis, and Shahar Mozes, \emph{Upper bounds and
  asymptotics in a quantitative version of the {O}ppenheim conjecture}, Ann. of
  Math. (2) \textbf{147} (1998), no.~1, 93--141. \MR{1609447}

\bibitem[EMM15]{emm2015}
Alex Eskin, Maryam Mirzakhani, and Amir Mohammadi, \emph{Isolation,
  equidistribution, and orbit closures for the {${\rm SL}(2,\mathbb{R})$}
  action on moduli space}, Ann. of Math. (2) \textbf{182} (2015), no.~2,
  673--721. \MR{3418528}

\bibitem[EMS03]{ems2003}
Alex Eskin, Howard Masur, and Martin Schmoll, \emph{Billiards in rectangles
  with barriers}, Duke Math. J. \textbf{118} (2003), no.~3, 427--463.
  \MR{1983037}

\bibitem[EMWM06]{emm2006}
Alex Eskin, Jens Marklof, and Dave Witte~Morris, \emph{Unipotent flows on the
  space of branched covers of {V}eech surfaces}, Ergodic Theory Dynam. Systems
  \textbf{26} (2006), no.~1, 129--162. \MR{2201941}

\bibitem[EMZ03]{emz2003}
Alex Eskin, Howard Masur, and Anton Zorich, \emph{Moduli spaces of abelian
  differentials: the principal boundary, counting problems, and the
  {S}iegel-{V}eech constants}, Publ. Math. Inst. Hautes \'Etudes Sci. (2003),
  no.~97, 61--179. \MR{2010740}

\bibitem[EO01]{eo2001}
Alex Eskin and Andrei Okounkov, \emph{Asymptotics of numbers of branched
  coverings of a torus and volumes of moduli spaces of holomorphic
  differentials}, Invent. Math. \textbf{145} (2001), no.~1, 59--103.
  \MR{1839286}

\bibitem[GJ00]{gj2000}
Eugene Gutkin and Chris Judge, \emph{Affine mappings of translation surfaces:
  geometry and arithmetic}, Duke Math. J. \textbf{103} (2000), no.~2, 191--213.
  \MR{1760625}

\bibitem[Lel06]{lelievre2006}
Samuel Leli{\`e}vre, \emph{Siegel-{V}eech constants in $\mathcal{H}(2)$}, Geom.
  Topol. \textbf{10} (2006), 1157--1172. \MR{2255494}

\bibitem[Mas82]{masur1982}
Howard Masur, \emph{Interval exchange transformations and measured foliations},
  Ann. of Math. (2) \textbf{115} (1982), no.~1, 169--200. \MR{644018}

\bibitem[Mas88]{masur1988}
\bysame, \emph{Lower bounds for the number of saddle connections and closed
  trajectories of a quadratic differential}, Holomorphic functions and moduli,
  {V}ol.\ {I} ({B}erkeley, {CA}, 1986), Math. Sci. Res. Inst. Publ., vol.~10,
  Springer, New York, 1988, pp.~215--228. \MR{955824}

\bibitem[Mas90]{masur1990}
\bysame, \emph{The growth rate of trajectories of a quadratic differential},
  Ergodic Theory Dynam. Systems \textbf{10} (1990), no.~1, 151--176.
  \MR{1053805}

\bibitem[McM07]{mcmullen2007}
Curtis~T. McMullen, \emph{Dynamics of {${\rm SL}_2(\Bbb R)$} over moduli space
  in genus two}, Ann. of Math. (2) \textbf{165} (2007), no.~2, 397--456.
  \MR{2299738}

\bibitem[MMY15]{mmy2015}
Carlos Matheus, Martin M\"oller, and Jean-Christophe Yoccoz, \emph{A criterion
  for the simplicity of the {L}yapunov spectrum of square-tiled surfaces},
  Invent. Math. \textbf{202} (2015), no.~1, 333--425. \MR{3402801}

\bibitem[Sch02]{schmoll2002}
M.~Schmoll, \emph{On the asymptotic quadratic growth rate of saddle connections
  and periodic orbits on marked flat tori}, Geom. Funct. Anal. \textbf{12}
  (2002), no.~3, 622--649. \MR{1924375}

\bibitem[Vee82]{veech1982}
William~A. Veech, \emph{Gauss measures for transformations on the space of
  interval exchange maps}, Ann. of Math. (2) \textbf{115} (1982), no.~1,
  201--242. \MR{644019}

\bibitem[Vee89]{veech1989}
W.~A. Veech, \emph{Teichm\"uller curves in moduli space, {E}isenstein series
  and an application to triangular billiards}, Invent. Math. \textbf{97}
  (1989), no.~3, 553--583. \MR{1005006}

\bibitem[Vor96]{vorobets1996}
Ya.~B. Vorobets, \emph{Plane structures and billiards in rational polygons: the
  {V}eech alternative}, Uspekhi Mat. Nauk \textbf{51} (1996), no.~5(311),
  3--42. \MR{1436653}

\bibitem[Wri15]{wright2015}
Alex Wright, \emph{Translation surfaces and their orbit closures: an
  introduction for a broad audience}, EMS Surv. Math. Sci. \textbf{2} (2015),
  no.~1, 63--108. \MR{3354955}

\bibitem[Zor06]{zorich2006}
Anton Zorich, \emph{Flat surfaces}, Frontiers in number theory, physics, and
  geometry. {I}, Springer, Berlin, 2006, pp.~437--583. \MR{2261104}

\end{thebibliography}
\bibliographystyle{amsalpha}

\end{document}